\newtheorem {thm}{Theorem}[section]
\newtheorem{lem}[thm]{Lemma}
\newtheorem{prop}[thm]{Proposition}
\newtheorem{cor}[thm]{Corollary}
\newtheorem{df}[thm]{Definition}
\newtheorem{ex}[thm]{Example}
\newtheorem{exs}[thm]{Examples}
\newtheorem{rem}[thm]{Remark}
\begin{document}

\title{Reversible ring property via idempotent elements}

\author[H. Kose]{Handan Kose}
\address{Handan Kose, Department of Mathematics, Ahi Evran University, Kirsehir, Turkey}
\email{handan.kose@ahievran.edu.tr}

\author[B. Ungor]{Burcu Ungor}
\address{Burcu Ungor, Department of Mathematics, Ankara
University, Ankara, Turkey}\email{bungor@science.ankara.edu.tr}

\author[A. Harmanci]{Abdullah Harmanci}
\address{Abdullah Harmanci, Department of Mathematics, Hacettepe
University, Ankara,~ Turkey}\email{harmanci@hacettepe.edu.tr}

\date{}

\begin{abstract}

Regarding the question of how idempotent elements affect
reversible property of rings, we study a version of reversibility
depending on idempotents. In this perspective, we introduce {\it
right} (resp., {\it left}) {\it $e$-reversible rings}. We show
that this concept is not left-right symmetric. Basic properties of
right $e$-reversibility in a ring are provided. Among others it is
proved that if $R$ is a semiprime ring, then $R$ is right
$e$-reversible if and only if it is right $e$-reduced if and only
if it is $e$-symmetric if and only if it is right
$e$-semicommutative. Also, for a right $e$-reversible ring $R$,
$R$ is a prime ring if and only if it is a domain. It is shown
that the class of right $e$-reversible rings is strictly between
that of $e$-symmetric rings and right $e$-semicommutative rings.
 \vskip 0.5cm
\noindent {\bf 2010 MSC:}  16U80,  16D80, 16N60, 16S50\\
\noindent {\bf Keywords:} Reversible ring, $e$-reduced ring,
$e$-symmetric ring, $e$-semicommutative ring,  idempotent element
\end{abstract}
\maketitle
\section{Introduction}
Throughout this paper, all rings are associative with identity.
For a ring $R$, let $N(R)$, Id$(R)$ and $C(R)$ denote the set of
all nilpotents, the set of all idempotents and the center of $R$,
respectively. We denote the $n\times n$ full (resp., upper
triangular) matrix ring over $R$ by $M_n(R)$ (resp., $U_n(R))$,
and $D_n(R)$ stands for the subring of $U_n(R)$ having all
diagonal entries are equal and $V_n(R) = \{(a_{ij})\in D_n(R)\mid
a_{ij} = a_{(i+1)(j+1)}$ for $i = 1,\dots,n - 2$ and $j =
2,\dots,n-1\}$ a subring of  $D_n(R)$ and $E_{ij}$ denote the
matrix unit in $M_n(R)$ whose $(i, j)$-th entry is 1 and the
others are zero. The rings $\Bbb Z$ and $\Bbb Z_n$ denote the ring
of integers and the ring of integers modulo $n$ where $n$ is an
integer with $n\geq 2$.

A ring is usually called {\it reduced} if it has no nonzero
nilpotents. In \cite{Co}, a ring $R$ is called {\it reversible} if
$ab = 0$ implies $ba = 0$ for $a$, $b\in R$. In \cite{AC} it is
used the term $ZC_2$ for the reversible property. Lambek in
\cite{La} calls a ring $R$ {\it symmetric} if, for all $a$, $b$,
$c\in R$, $abc = 0$ implies $acb = 0$. $ZC_3$ is used for
symmetricity for rings in \cite{AC}. Reduced rings are both
reversible and symmetric in \cite{MW}. $e$-symmetric rings and
right(left) $e$-reduced rings are introduced  as a generalization
of symmetric rings and reduced rings respectively. Let $R$ be a
ring and $e\in$ Id$(R)$. The ring $R$ is called {\it
$e$-symmetric} if $abc = 0$ implies $acbe = 0$ for all $a$, $b$,
$c\in R$, and also $R$ is called {\it right} (resp., {\it left})
{\it $e$-reduced} if $N(R)e = 0$ (resp., $eN(R) = 0)$. It is
proved that right $e$-reduced rings are $e$-symmetric. An
idempotent $e$ of a ring $R$ is called {\it left} (resp., {\it
right}) {\it semicentral} if $ae = eae$ (resp., $ea = eae)$ for
each $a\in R$. In \cite{KUH}, a ring $R$ is called {\it right}
(resp. {\it left}) {\it $e$-semicommutative} if for any $a$, $b\in
R$, $ab = 0$ implies $aRbe = 0$ (resp. $eaRb = 0$), $R$ is {\it
$e$-semicommutative} in case $R$ is both right and left
$e$-semicommutative. Every commutative ring and every right
$e$-reduced ring is a right $e$-semicommutative ring.

\section{Right $e$-reversible rings }

In this section we deal with a generalization of an $e$-symmetric ring and that of a reduced ring,
which shall be said to be $e$-reversible. $e$-reversible context also generalizes $e$-reduced concept in \cite{MW}. We show that reduced rings
are $e$-reversible, and next that the class of $e$-reversible rings is quite large. For a ring $R$ we investigate the idempotents of $D_n(R)$ and some subrings of $M_n(R)$ where $n\geq 2$, and apply this to observe the relation between the $e$-reversibility of $R$ and the $E$-reversible ring property of $D_3(R)$ and some subrings of $M_n(R)$.

\begin{df}{\rm Let $R$ be a ring and $e\in$ Id$(R)$ with $e\neq 0$. Then $R$ is called {\it right
$e$-reversible} (resp., {\it left $e$-reversible)} if for any $a$,
$b\in R$, $ab = 0$ implies $bae = 0$ (resp., $eba = 0$). The ring
$R$ is {\it $e$-reversible} if it is both left and right
$e$-reversible.}
\end{df}

In the next result, we give a characterization of right
$e$-reversibility in terms of subsets of rings and its proof is
straightforward.

\begin{prop} The following are equivalent for a ring $R$.
\begin{enumerate}
    \item $R$ is right $e$-reversible.
    \item For any nonempty subsets $A, B$ of $R$, being $AB=0$
    implies $BAe=0$.
\end{enumerate}
\end{prop}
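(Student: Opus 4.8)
The plan is to prove the two implications separately, in each case simply unwinding what a product of subsets means. Recall that for subsets $A,B\subseteq R$ the symbol $AB$ denotes the set (indeed additive subgroup) of all finite sums $\sum_i a_ib_i$ with $a_i\in A$, $b_i\in B$; since every elementary product $ab$ occurs as a one-term sum, one has $AB=0$ if and only if $ab=0$ for all $a\in A$ and all $b\in B$. This elementary observation is the only thing the argument really uses.

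For the implication $(2)\Rightarrow(1)$ I would specialize to singletons. Given $a,b\in R$ with $ab=0$, take $A=\{a\}$ and $B=\{b\}$; these are nonempty and satisfy $AB=0$, so the hypothesis yields $BAe=\{bae\}=0$, that is, $bae=0$. Hence $R$ is right $e$-reversible, and this direction is immediate.

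For $(1)\Rightarrow(2)$, let $A,B$ be nonempty subsets of $R$ with $AB=0$. By the observation above, $ab=0$ for every $a\in A$ and $b\in B$. Right $e$-reversibility then gives $bae=0$ for each such pair, and therefore every finite sum $\sum_i b_ia_ie$ vanishes; since $BAe$ consists exactly of such sums, $BAe=0$. The only point needing (very minor) care is this passage between "$AB=0$" and the vanishing of all elementary products $ab$, together with the corresponding reassembly of $BAe$ from the products $bae$; there is no genuine obstacle here, which is consistent with the paper's remark that the proof is straightforward.
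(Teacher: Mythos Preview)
Your argument is correct and is exactly the straightforward verification the paper alludes to; the paper in fact omits the proof entirely, simply noting that it is immediate. Your handling of the only mildly delicate point---that $AB=0$ is equivalent to $ab=0$ for all $a\in A$, $b\in B$, regardless of whether one reads $AB$ as the set of products or the additive subgroup they generate---is accurate and leaves nothing to add.
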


Clearly, a ring $R$ is reversible if and only if $R$ is
$1$-reversible. The following example shows that $e$-reversibility
is not left-right symmetric. Also, the reversibility of a ring
with respect to an idempotent $e$ depends on $e$. There are rings
$R$ and idempotents $e_1$ and $e_2$ such that $R$ is right
$e_1$-reversible but not right $e_2$-reversible as the following
example shows.

\begin{ex}\label{ikie}{\rm Consider the ring $R = U_2(\Bbb Z)$ and
$E_1 = \begin{bmatrix}1&1\\0&0 \end{bmatrix}$, $E_2 =
\begin{bmatrix}0&0\\0&1
\end{bmatrix}\in $ Id$(R)$. Then the following hold. \begin{enumerate}\item $R$ is not reversible.\item  $R$ is right $E_1$-reversible but not left  $E_1$-reversible.\item $R$ is left $E_2$-reversible but not right  $E_2$-reversible.
\end{enumerate}}
\end{ex}
\begin{proof} (1) $R$ is not reversible, in fact, $E_2E_1 = 0$ but $E_1E_2\neq
0$.\\
Let $A$, $B\in R$ with $AB = 0$. Then $BA$ is of the form
$\begin{bmatrix}0&x\\0&0 \end{bmatrix}$ where $x\in \Bbb Z$. Assume that $x\neq 0$.\\
(2) On the one hand, $BAE_1 = 0$. Hence $R$ is right
$E_1$-reversible. On the other hand, $E_1BA \neq 0$. Thus  $R$ is
not left $E_1$-reversible.
\\ (3) $E_2BA =0$ implies that $R$ is left $E_2$-reversible, and $R$
is not right $E_2$-reversible since $BAE_2\neq 0$.
\end{proof}

For a reduced ring $R$, we now show that $M_n(R)$ is neither right
$e$-reversible nor left $e$-reversible for some $e\in $
Id$(M_n(R))$.

\begin{ex}\label{reduced}{\rm Let $R$ be a reduced ring and $E_{ij}$ denote the matrix unit in $M_n(R)$ whose $(i, j)$-th
entry is 1 and the others are zero. Consider $A = E_{23}$, $B =
E_{12}$ and $E = E_{11} + E_{33}\in M_n(R)$. Then $AB = 0$ and
$BA\neq 0$. Also, $BAE\neq 0$. Hence $M_n(R)$ is not right
$E$-reversible. Similarly, $EBA\neq 0$. Thus $M_n(R)$ is not left
$E$-reversible. Therefore $M_n(R)$ is neither right $E$-reversible
nor left $E$-reversible. }
\end{ex}

The following examples provide another sources of examples for
right $e$-reversible rings.

\begin{exs}\label{exs}{\rm (1) Every reversible ring is $e$-reversible.\\
(2) Every right $e$-reduced ring is  right $e$-reversible.\\
(3) Every  $e$-symmetric ring is  right $e$-reversible.}
\end{exs}
\begin{proof} (1) Let $R$ be a reversible ring with $e^2 = e\in R$. Assume that $ab = 0$.
Then $ba = 0$ and so $bae = 0$ and $eba=0$.\\
(2) Let $R$ be a right $e$-reduced ring and $a$, $b\in R$ with $ab
= 0$. Then $ba$ is nilpotent. Being $R$ right $e$-reduced, $bae =
0$.\\
(3) It is clear since the rings have identity.
\end{proof}

There are right $e$-reversible rings $R$ for some $e\in$ Id$(R)$
but not reversible as shown below. This yields that the converse
of Examples \ref{exs}(1) need not be true in general.

\begin{ex}\label{counter} {\rm For a reversible ring $R$, the ring $U_2(R)$ is right $E$-reversible for some $E\in$ Id$(U_2(R))$ but not
reversible.}
\end{ex}
\begin{proof} Let $A = \begin{bmatrix}0 & 1 \\0 & 1 \\\end{bmatrix}$, $B=\begin{bmatrix}1 & 0 \\0 & 0 \\\end{bmatrix}\in U_2(R)$,
we get $AB=0$ but $BA\neq 0$, hence $U_2(R)$ is not reversible.
Let $A=\begin{bmatrix}a & b \\  0 & c \\\end{bmatrix},
B=\begin{bmatrix}x & y \\0 & z \\\end{bmatrix}\in U_2(R)$
such that $AB=0$. Then we have $BA=\begin{bmatrix} xa & * \\0 & zc \\
\end{bmatrix}$. The reversibility of $R$ yields  $xa=0$ and $zc=0$. Let $E = \begin{bmatrix}1 & 0 \\0 & 0\end{bmatrix}$.
We get $BAE = 0$. Thus $R$ is right $E$-reversible.
\end{proof}

\begin{thm}\label{thm} Let $R$ be right $e$-reversible and right $1 - e$-reversible ring. Then $R$ is semiprime if and only if $R$ is reduced if and only if $R$ is reversible.
\end{thm}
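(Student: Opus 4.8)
The plan is to extract everything from the two hypotheses in the opening line. Suppose $ab=0$ in $R$. Right $e$-reversibility gives $bae=0$, while right $1-e$-reversibility gives $ba(1-e)=0$, so that
\[
ba=ba\,e+ba\,(1-e)=0 .
\]
Hence the pair of one-sided $e$-reversibility conditions is exactly full reversibility of $R$. In particular ``$R$ is reversible'' holds unconditionally under the hypotheses, and the implication ``reduced $\Rightarrow$ reversible'' is classical (see \cite{MW}) and needs nothing. So the genuine content is the equivalence ``$R$ semiprime $\Leftrightarrow$ $R$ reduced'', after which the node ``reversible'' slots into the equivalence for free.

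For ``reduced $\Rightarrow$ semiprime'' there is nothing to do: a reduced ring has no nonzero nil ideal, since any element of such an ideal would be a nonzero nilpotent. For the converse I would assume $R$ semiprime and first kill square-zero elements. Given $a\in R$ with $a^{2}=0$, for every $r\in R$ we have $a(ar)=a^{2}r=0$, and reversibility — just established — turns this into $(ar)a=0$, i.e. $ara=0$; thus $aRa=0$, and semiprimeness of $R$ forces $a=0$. To pass from ``no square-zero elements'' to ``$R$ reduced'' I run a descent on the nilpotency index: if $a^{n}=0$ with $n\ge 2$ chosen minimal, then $(a^{n-1})^{2}=a^{2n-2}=0$ because $2n-2\ge n$, hence $a^{n-1}=0$, contradicting minimality of $n$. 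Therefore $N(R)=0$, that is, $R$ is reduced.

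Assembling the pieces: $R$ is reversible by the opening remark, ``reduced $\Rightarrow$ semiprime'' is trivial, ``semiprime $\Rightarrow$ reduced'' is the argument just sketched, and ``reduced $\Rightarrow$ reversible'' is classical; since the hypotheses already deliver reversibility, this chains the three properties into the single equivalence asserted. I expect no serious obstacle here — the one idea that makes everything run is the splitting $ba=ba\,e+ba\,(1-e)$, which converts two one-sided conditions into honest reversibility. The only place demanding a little care is the descent step, where one must argue on the nilpotency exponent rather than invoke reducedness (the very conclusion) prematurely; after that, the rest is standard semiprime/reduced bookkeeping.
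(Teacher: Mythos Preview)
Your argument is correct and takes a genuinely different route from the paper. The paper never isolates full reversibility: it works with each idempotent separately, showing for $b^2=0$ that $brbe=0$ for all $r$, hence $(be)R(be)=0$, hence $be=0$ by semiprimeness, then arguing inductively that $ae=0$; the same run with $1-e$ gives $a(1-e)=0$, so $a=0$. You instead observe at the outset that the two hypotheses combine via $ba=bae+ba(1-e)=0$ to give honest reversibility, after which ``semiprime $\Rightarrow$ reduced'' is the standard one-line argument $a^2=0\Rightarrow aRa=0\Rightarrow a=0$. Your approach is shorter and more conceptual: it explains \emph{why} the two one-sided conditions cooperate, whereas the paper treats them in parallel. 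One further consequence of your splitting observation, worth noting: since the hypotheses force reversibility unconditionally, the clause ``if and only if $R$ is reversible'' in the stated chain is not a genuine equivalence with the other two---for instance $R=\mathbb{Z}_4\times\mathbb{Z}$ with $e=(1,0)$ satisfies both hypotheses and is reversible but is neither semiprime nor reduced. The paper's proof, like yours, in fact only establishes semiprime $\Leftrightarrow$ reduced.
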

\begin{proof} "Only if" part. Assume that $R$ is semiprime right $e$-reversible and right $1 - e$-reversible ring. Let $a^n = 0$ and $b = a^{n-1}$. Then $b^2 = 0$. Right $e$-reversibility of $R$ yields $brbe = 0$ for each $r\in R$. Since $r$ is arbitrary in $R$, we may replace $r$ by $er$ to get $berbe = 0$ for each $r\in R$. Being $R$ semiprime yields $be = 0$. Hence by induction $ae = 0$. By replacing $e$ by $1 - e$ in this proof we may have that $a(1 - e) = 0$. Hence $a = 0$. Thus $R$ is reduced.
\end{proof}

Let $R$ be a ring with a ring homomorphism $\sigma$. Consider the
ring $U_2(R)_{\sigma}$ consisting of all elements of $U_2(R)$ with
usual matrix addition in $U_2(R)$ and multiplication defined by
\begin{center} $\begin{bmatrix}a & b \\0 & c \end{bmatrix}\begin{bmatrix}x & y \\0 & z \end{bmatrix} =
\begin{bmatrix}ax & ay + b\sigma(z)\\0&cz\end{bmatrix}$ where
$\begin{bmatrix}a & b \\0 & c \\\end{bmatrix}$, $\begin{bmatrix}x
& y \\0 & z \\\end{bmatrix}\in U_2(R)_{\sigma}$.\end{center}

\begin{ex} {\rm Let $R$ be a ring, $\sigma$ a ring homomorphism of $R$ and $e\in$ Id$(R)$.
If $U_2(R)_{\sigma}$ is right $eI_2$-reversible, then $R$ is right
$e$-reversible. The converse holds if $e\in $ Ker$\sigma$. }
\end{ex}

%The reversibility of a ring with respect to an idempotent $e$
%depends on $e$. There are rings $R$ and idempotents $e_1$ and
%$e_2$ such that $R$ is right $e_1$-reversible but not right
%$e_2$-reversible as the following example shows.

%\begin{ex}\label{ikie}{\rm Consider the ring $R = U_2(\Bbb Z)$ and $E_1 = \begin{bmatrix}1&1\\0&0 \end{bmatrix}$, $E_2 = %\begin{bmatrix}0&0\\0&1
%\end{bmatrix}\in $ Id$(R)$. Then the following hold. \begin{enumerate}\item $R$ is not reversible.\item  $R$ is right $E_1$-reversible but %not left  $E_1$-reversible.\item $R$ is left $E_2$-reversible but not right  $E_2$-reversible.
%\end{enumerate}}
%\end{ex}
%\begin{proof} Let $A$, $B\in R$ with $AB = 0$ and $BA = \begin{bmatrix}0&b\\0&0 \end{bmatrix}\neq 0$.\\ (1) $R$ is not reversible. In fact, %$E_2E_1 = 0$ but $E_1E_2\neq 0$.\\(2) Clearly $BAE_1 = 0$. $R$ is right $E_1$-reversible. On the one hand, $R$ is not left $E_1$-reversible %since $E_1BA \neq 0$. On the other hand, $E_2BA =0$ implies $R$ to be left $E_2$-reversible and $R$ is not right $E_2$-reversible since %$BAE_2\neq 0$.
%\end{proof}

It is well known that every reversible ring is abelian, but this
is not the case for right $e$-reversibility, Example \ref{ikie}
illustrates this claim. However, according to the following
result, right $e$-reversibility implies abelianness of the corner
ring $eRe$ of a ring $R$. In the meantime, we give the main
characterization of right $e$-reversibility.

\begin{thm}\label{ere}  Let $R$ be a ring and $e\in$ Id$(R)$. Then $R$ is a right $e$-reversible ring
if and only if $eRe$ is a reversible ring and $e$ is left
semicentral.
\end{thm}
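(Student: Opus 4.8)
The plan is to prove both implications directly from the definitions. For the ``if'' direction, assume $eRe$ is reversible and $e$ is left semicentral (so $ae = eae$ for all $a \in R$). Given $a, b \in R$ with $ab = 0$, I want to show $bae = 0$. The key observation is that left semicentrality of $e$ lets me relate arbitrary products to products inside the corner ring $eRe$: from $ab = 0$ I get $eabe = 0$, and using $ae = eae$ repeatedly I would like to rewrite this as $(eae)(ebe) = 0$ or a similar product of elements of $eRe$ lying in $eRe$. Then reversibility of $eRe$ gives $(ebe)(eae) = 0$, i.e. $ebae = 0$ after absorbing idempotents; finally, another application of left semicentrality (in the form $bae = ebae$? — this needs care, since left semicentrality gives $xe = exe$, not $ex = exe$) should upgrade $ebae = 0$ to $bae = 0$. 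The delicate point here is bookkeeping with which one-sided absorptions of $e$ are available; I expect to need to massage $ab = 0$ by multiplying on suitable sides and to use $ae = eae$ in both the forms $ae = eae$ and its consequence $ea \cdot e = e(ae) = e(eae) = eae$, so that the relevant product genuinely sits in $eRe$ before invoking its reversibility.

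For the ``only if'' direction, assume $R$ is right $e$-reversible. First I would prove $e$ is left semicentral. For any $a \in R$, consider the element $(1-e)ae$ — I claim $ea \cdot (1-e) $ or a similar product vanishes. Concretely, note that $e \cdot (1-e) = 0$, so right $e$-reversibility applied with the pair $(e, (1-e)r)$ or $((1-e)r, e)$ for arbitrary $r$ should force a product involving $e$ to vanish; the standard trick is: from $e(1-e) = 0$ we get, taking $a = (1-e)r$ and $b = e$ with $ab = (1-e)re = (1-e)(re)$... this isn't obviously zero, so instead I would start from the genuinely zero product $\bigl((1-e)\bigr)\bigl(e\bigr)$-type relations. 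The cleanest route: for any $r \in R$, $\bigl(e r (1-e)\bigr)$ times... Let me instead use that $\bigl((1-e) r e\bigr) \cdot$ something is zero. Actually the reliable computation is: $e \cdot e(1-e) = 0$ is trivial; better, observe $\bigl(r(1-e)\bigr)\bigl(e\bigr)$ need not be $0$. The workable identity is to apply right $e$-reversibility to $a = 1-e$, $b = (1-e)re$: then $ab = (1-e)(1-e)re = (1-e)re$, not zero in general. So the correct pair is $a = e r (1-e)$ times nothing — I will apply right $e$-reversibility to the product $\bigl(re(1-e)\bigr) = re - re \cdot e \cdot$... the honest statement is $(1-e)\cdot ere = (1-e)(ere)$, and $(ere)$... hmm. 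The standard and correct argument: for any $r\in R$, set $a = er(1-e)$; then $a = er - ere$ and $a \cdot$ is nilpotent-like. I expect the right move is: since $\bigl(er(1-e)\bigr)^2 = er(1-e)er(1-e) = 0$ (as $(1-e)e = 0$), the element $x := er(1-e)$ squares to zero, so with $a = b = x$, $ab = 0$, hence $bae = x^2 e = 0$ trivially — unhelpful. Thus instead apply right $e$-reversibility to $a = x = er(1-e)$ and a chosen $b$ with $xb = 0$: take $b = e$, so $xe = er(1-e)e = 0$; then right $e$-reversibility gives $exe = ex \cdot e = 0$, i.e. $e\bigl(er(1-e)\bigr)e = er(1-e)e = 0$, still trivial. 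The genuinely productive choice is $b$ with $bx = $ something: take $b$ arbitrary and $a = x$; we need $xb' = 0$. Since $xe = 0$, take $b' = e s$ for any $s$: $x(es) = er(1-e)es = 0$. Right $e$-reversibility then yields $(es)(x)e = 0$, i.e. $es\cdot er(1-e)\cdot e = eser(1-e)e = 0$ — trivial again.

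The resolution: apply right $e$-reversibility in the direction that produces $(1-e)$ on the \emph{right} getting killed. Concretely, from $(1-e)\cdot er = 0$? No: $(1-e)er = (e - e)r$? $(1-e)e = 0$, so $(1-e)(er) = 0$. Now apply right $e$-reversibility with $a = 1-e$, $b = er$: $ab = (1-e)er = 0$, hence $bae = (er)(1-e)e = er(1-e)e = 0$ — trivial once more because of the trailing $e$. This trailing $e$ obstruction means left semicentrality does \emph{not} come from right $e$-reversibility alone in this naive way, which tells me I should look instead at products ending in $(1-e)$ and use that $e \cdot(1-e) = 0$ with the \emph{first} factor being something ending appropriately: apply right $e$-reversibility to $a = r e$, $b = 1-e$ gives $ab = re(1-e) = re - re$, and $re(1-e)=re-re \ne 0$ generally, so that's not a zero product either. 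I will take $a = er$, $b = 1 - e$: $ab = er(1-e) \ne 0$. Take $a = re - r$? This is getting circular. I am fairly confident the intended argument starts from $e(1-e) = 0$ directly: apply right $e$-reversibility with $a = e$, $b = (1-e)r$: then $ab = e(1-e)r = 0$, so right $e$-reversibility gives $bae = (1-e)r\cdot e\cdot e = (1-e)re = 0$ for all $r$, which says $re = ere$ for all $r$ — that is $e$ is \textbf{right} semicentral, not left. Hmm, but the theorem asserts left semicentral. So dually, from $(1-e)e = 0$ apply with $a = (1-e)$, $b = $ ... $(1-e)e = 0$ means apply right $e$-reversibility to $a = 1-e$, $b$ with $(1-e)b = 0$, e.g. $b = er$: already did, trivial. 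I'll apply it to $a = r(1-e)$, $b = e$: $ab = r(1-e)e = 0$, so $bae = e r(1-e) e = er(1-e)e = 0$, trivial. So the only nontrivial output is $re = ere$, i.e. \emph{right} semicentral. I therefore suspect the theorem's ``left semicentral'' is under the convention $ae = eae$ meaning what I'd call right semicentral; given the paper defines left semicentral as $ae = eae$, and my computation $e\bigl((1-e)r\bigr) = 0 \Rightarrow (1-e)re\cdot e = (1-e)re = 0 \Rightarrow re = ere$... wait $(1-e)re = 0$ gives $re = ere$, which in the paper's language ($xe = exe$) \emph{is} left semicentral with $x = r$. Good — so that computation is exactly it. The main obstacle I anticipate is precisely this careful matching of one-sided idempotent absorptions; once $e$ is (left) semicentral, showing $eRe$ is reversible is routine: if $exe \cdot eye = 0$ in $eRe$ then $exeye \in$ and using semicentrality $exeye = exye'$... apply right $e$-reversibility to $a = exe$, $b = eye$ to get $eye\cdot exe\cdot e = eyexe = 0$, and since $e$ is the identity of $eRe$ and semicentral this says $(eye)(exe) = 0$ in $eRe$, giving reversibility.

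I will organize the final write-up as: (i) ``only if'': derive left semicentrality of $e$ from the zero product $e(1-e) = 0$ via right $e$-reversibility, then derive reversibility of $eRe$ by applying right $e$-reversibility to elements of $eRe$ and absorbing idempotents using semicentrality; (ii) ``if'': given $ab = 0$, pass to $e(ab)e = 0$, rewrite using $xe = exe$ as a product of two elements of $eRe$, apply reversibility of $eRe$, and translate back to $bae = 0$. The principal difficulty, as the flailing above makes plain, is getting the one-sided absorptions of $e$ exactly right; I would pin these down first with small lemmas (``$e$ left semicentral $\Rightarrow$ $eae \cdot ebe = eabe$ for all $a,b$'' and the like) before assembling the two directions.
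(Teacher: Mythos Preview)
Your proposal is correct and, once you work through the terminology confusion, follows essentially the same route as the paper: for the ``only if'' direction the paper also starts from $e(1-e)x = 0$ and applies right $e$-reversibility to get $(1-e)xe = 0$, i.e.\ $xe = exe$ (left semicentral in the paper's convention), then checks $eRe$ is reversible directly; for the ``if'' direction the paper likewise uses left semicentrality to pass from $ab = 0$ to $(eae)(ebe) = 0$, applies reversibility of $eRe$, and uses left semicentrality again to recover $bae = 0$. Your extended detour searching for the right zero product is unnecessary---the pair $a = e$, $b = (1-e)r$ (equivalently the paper's $e(1-e)x = 0$) is the one that works, and you do eventually land on it.
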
\label{semi}
\begin{proof} For the necessity, assume that $R$ is a right $e$-reversible ring. Then $e(1 - e)x = 0$ for all $x\in R$.
By assumption $(1 - e)xe = 0$. So $xe = exe$. Hence $e$ is left
semicentral. Let $x, y\in R$ with $(exe)(eye) = 0$. By assumption,
$(eye)(exe) = 0$. Hence $eRe$ is reversible. For the sufficiency,
suppose that $eRe$ is a reversible ring and $e$ is left
semicentral. Let $a$, $b\in R$ such that $ab = 0$. The idempotent
$e$ being left semicentral and $abe = 0$ imply $(eae)(ebe) = 0$.
Reversibility of $eRe$ gives rise to $(ebe)(eae) = 0$. Again we
invoke being $e$ left semicentral to get $bae = 0$. Thus $R$ is
right $e$-reversible.
\end{proof}

Similar to Theorem \ref{ere}, we have the following result.
\begin{prop} Let R be a ring and $e\in$ Id$(R)$. Then $R$ is left $e$-reversible if and only if $e$ is right semicentral in $R$ and $eRe$ is reversible.
\end{prop}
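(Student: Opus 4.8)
The final statement to prove is the left-handed mirror of Theorem \ref{ere}, namely that $R$ is left $e$-reversible if and only if $e$ is right semicentral in $R$ and $eRe$ is reversible. The plan is to run a direct translation of the argument used for Theorem \ref{ere}, swapping the roles of left and right multiplication throughout. First, for the necessity I would start from the identity $(1-e)x\cdot e = 0$, which holds for every $x\in R$ (since $(1-e)e=0$); left $e$-reversibility then forces $e\cdot x(1-e) = 0$, i.e. $ex = exe$ for all $x$, which is exactly the statement that $e$ is right semicentral. Then, to see that $eRe$ is reversible, take $x,y\in eRe$ with $xy=0$; applying left $e$-reversibility gives $eyx=0$, and since $x,y\in eRe$ we have $eyx = yx$, so $eRe$ is reversible.

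For the sufficiency, I would assume $e$ is right semicentral and $eRe$ is reversible, and let $a,b\in R$ with $ab=0$. The goal is to produce $eba=0$. Using right semicentrality ($er = ere$ for all $r$), I would rewrite $eab = 0$ as $(eae)(ebe)\cdot$(something) — more carefully, from $ab=0$ we get $eab=0$, and then $eab = e a (be)$; I want to massage this into a product of two elements of $eRe$ that is zero. Since $e$ is right semicentral, $ea = eae$, so $eab = eae\cdot b$, and then $eaeb = eae\cdot eb$? That step needs $eb = eeb$, which is trivially true, but I need $eaeb = eae b$ to equal $(eae)(ebe)$, which requires absorbing a right $e$; here one uses $eaeb = eae\,b$ and then notes $eae\,b\,e$ differs — the cleanest route is to observe $e(ab)e = 0$, expand using right semicentrality to get $(eae)(ebe)=0$ inside the ring $eRe$, invoke reversibility of $eRe$ to get $(ebe)(eae)=0$, and finally use right semicentrality once more to convert $(ebe)(eae) = eb\cdot eae = eb\cdot ea$ — wait, this still has an extra $e$ in the middle. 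The correct bookkeeping: right semicentrality gives $ex=exe$, hence $eba = e b a$, and $eba = eb\,a$; since $eb = ebe$ is false in general (right semicentrality says $eb=ebe$? no — right semicentral means $ea=eae$, so indeed $eb=ebe$), so $eba = ebe\,a = (ebe)(ea)$? No: $ebea$ vs $eba$ — these agree because $eb = ebe$. And $ea = eae$, so $(ebe)(ea) = (ebe)(eae)$, which is the element we showed is zero. So $eba = (ebe)(eae) = 0$, as desired.

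The main obstacle here is purely the index bookkeeping: keeping straight that "right semicentral" means $ea = eae$ (the $e$ absorbs on the right of anything it multiplies on the left), and applying it consistently in the right places so that an arbitrary product collapses to a product of two elements of the corner ring $eRe$, and conversely expands back out. There is no genuine mathematical difficulty beyond Theorem \ref{ere}; indeed one could alternatively deduce the result formally by applying Theorem \ref{ere} to the opposite ring $R^{\mathrm{op}}$, observing that $R$ is left $e$-reversible iff $R^{\mathrm{op}}$ is right $e$-reversible, that $e$ is right semicentral in $R$ iff it is left semicentral in $R^{\mathrm{op}}$, and that $eRe$ is reversible iff $(eRe)^{\mathrm{op}} = eR^{\mathrm{op}}e$ is reversible. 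I would likely present the direct argument for self-containedness, but mention the opposite-ring shortcut as a remark.
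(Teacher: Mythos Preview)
Your approach is exactly the left--right mirror of the paper's argument for Theorem \ref{ere}, which is precisely what the paper intends (the proposition is stated without proof, with a pointer back to Theorem \ref{ere}); the opposite-ring remark is a valid alternative as well.

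There is, however, a genuine slip in your necessity argument: the displayed identity ``$(1-e)x\cdot e = 0$'' is \emph{not} true for all $x$ (take $e=E_{11}$ and $x=E_{21}$ in $M_2(\Bbb Z)$, where $(1-e)xe = E_{21}\neq 0$). The correct starting point is $x(1-e)\cdot e = 0$: setting $a = x(1-e)$ and $b = e$, left $e$-reversibility yields $eba = e\cdot e\cdot x(1-e) = ex(1-e) = 0$, hence $ex = exe$ and $e$ is right semicentral. Your own justification ``since $(1-e)e=0$'' and your stated conclusion ``$e\cdot x(1-e)=0$'' both match this corrected version, so this appears to be a transposition error rather than a conceptual one; just be sure to fix it in the write-up. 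The sufficiency direction, once the bookkeeping dust settles, is correct: from $ab=0$ one gets $(eae)(ebe)=0$ in $eRe$, reversibility gives $(ebe)(eae)=0$, and right semicentrality ($eb=ebe$, $ea=eae$) collapses this back to $eba=0$.
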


\begin{prop} Every right $e$-reversible ring is right  $e$-semicommutative.
\end{prop}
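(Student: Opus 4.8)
The plan is to push everything into the corner ring $eRe$, using Theorem~\ref{ere}: a right $e$-reversible ring $R$ has $e$ left semicentral and $eRe$ reversible. Left semicentrality means $xe = exe$ (equivalently $exe = xe$) for all $x\in R$, so that $Re = eRe$; this is the property that lets us freely absorb or drop an $e$ adjacent to another one, or an $e$ sitting on the outside of an expression that already ends in $e$. What must be shown is that $ab = 0$ forces $arbe = 0$ for every $r\in R$, i.e.\ $aRbe = 0$.

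The first step is the bookkeeping identity
\[
(eae)(ere)(ebe) = arbe \qquad (a,r,b\in R),
\]
obtained by collapsing the doubled idempotents on the left-hand side to reach $eaerebe$, then rewriting the inner block $ere = re$ and the trailing block $ebe = be$, and finally peeling the leading $e$ off $e(arb)e = arbe$. The same manipulations applied to $ab = 0$ give $(eae)(ebe) = eaebe = e(ab)e = 0$, so $eae$ and $ebe$ multiply to $0$ in $eRe$.

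The second step is the two-line reversibility computation inside $eRe$ (which has identity $e$): put $\alpha = eae$, $\beta = ebe$, $\gamma = ere$. From $\alpha\beta = 0$ and reversibility of $eRe$ we get $\beta\alpha = 0$, hence $\beta\alpha\gamma = 0$, and applying reversibility of $eRe$ once more to the pair $(\beta,\ \alpha\gamma)$ gives $\alpha\gamma\beta = 0$. By the identity of the first step this says $arbe = 0$ for every $r\in R$, which is exactly right $e$-semicommutativity.

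The only delicate point is the identity $(eae)(ere)(ebe) = arbe$: it is entirely mechanical, but one must be careful to invoke \emph{left} (not right) semicentrality at each step — it is left semicentrality, precisely what Theorem~\ref{ere} supplies, that makes $exe = xe$ available and so lets the superfluous $e$'s disappear. Everything after that is just the standard reversible-implies-semicommutative trick transplanted into $eRe$.
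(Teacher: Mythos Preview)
Your proof is correct. The underlying idea is the same as the paper's---apply the reversibility condition twice and invoke left semicentrality of $e$---but the packaging differs: the paper works directly in $R$, applying the defining property of right $e$-reversibility to $ab=0$ to get $bae=0$, hence $b(aer)=0$ for any $r$, and then once more to obtain $aerbe=0$, after which left semicentrality (from Theorem~\ref{ere}) collapses $aerbe$ to $arbe$. You instead invoke the full characterization of Theorem~\ref{ere} up front, push everything into the corner ring $eRe$, and run the standard reversible-$\Rightarrow$-semicommutative argument there. Your route is a bit more conceptual (it makes explicit that the result is just the classical implication transported to $eRe$), while the paper's is slightly more economical (it only needs the left-semicentral half of Theorem~\ref{ere}, using the $e$-reversibility hypothesis itself rather than reversibility of $eRe$); but the two arguments are really the same computation read through Theorem~\ref{ere}.
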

\begin{proof} Let $a$, $b\in R$ with $ab = 0$. For any $r\in R$, by hypothesis,
$baer = 0$ and again applying hypothesis to $baer = 0$ we get $aerbe = 0$. By Theorem \ref{ere}, $e$ is
left semicentral. Hence $arbe = 0$ for all $r\in R$. Therefore $R$
is right $e$-semicommutative.
\end{proof}

In \cite[Corollary 4.3]{MW} it is proved that every right
$e$-reduced ring is $e$-symmetric. Then we have the picture:
{\small \begin{center} $\hspace*{-0.3cm}\xymatrix{\mbox{reduced}
\ar@{=>}[r] \ar@{=>}[d] &\mbox{symmetric} \ar@{=>}[r] \ar@{=>}[d]&
\mbox{reversible}
\ar@{=>}[r] \ar@{=>}[d] &\mbox{semicommutative} \ar@{=>}[d]\\
\mbox{right}~e\mbox{-reduced} \ar@{=>}[r] &e\mbox{-symmetric}
\ar@{=>}[r] & \mbox{right}~e\mbox{-reversible} \ar@{=>}[r]
&\mbox{right}~e\mbox{-semicommutative}.\\} $
\end{center}}
There are right $e$-semicommutative rings but not right $e$-reversible.
It is known that there are semicommutative rings that are not
reversible.

\begin{ex}{\rm Let $R$ be a semicommutative ring that is not reversible.
Consider the ring H$_{(1,1)}(R)$ in \cite[Theorem 3.2]{KUH} and $E
= E_{11} + E_{21}$. It is proved that H$_{(1,1)}(R)$ is right
$E$-semicommutative. Let $a$, $b\in R$ with $ab = 0$ and $ba\neq
0$. The existence of $a$ and $b$ comes from $R$ not being
reversible. Let $A = aE$, $B = bE\in$ H$_{(1,1)}(R)$. Then $AB =
0$ but $BAE = BA\neq 0$. Hence H$_{(1,1)}(R)$ is right
$E$-semicommutative but not right $E$-reversible.}
\end{ex}

\begin{prop} Every right $e$-semicommutative reflexive ring is
right $e$-reversible.
\end{prop}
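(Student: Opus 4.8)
Recall that a ring $R$ is called \emph{reflexive} if $aRb = 0$ implies $bRa = 0$ for all $a,b \in R$. The plan is to start from $ab = 0$, feed this into right $e$-semicommutativity to get a two-sided annihilation statement of the form $aRbe = 0$, then convert this into a statement that reflexivity can act on, and finally squeeze out $bae = 0$.

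\begin{prop} Every right $e$-semicommutative reflexive ring is right $e$-reversible.
\end{prop}

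\begin{proof} Let $R$ be a right $e$-semicommutative reflexive ring and let $a, b \in R$ with $ab = 0$. By right $e$-semicommutativity, $aRbe = 0$. We wish to rewrite this so that reflexivity applies. Since $aR(be) = 0$ and in particular $aR(beR) \subseteq aRbe = 0$ (using that $eR \subseteq R$), we have $a R (beR) = 0$. Hence by reflexivity $(beR) R a = 0$, that is, $beRa = 0$; in particular taking the identity in the middle, $b e a = 0$? No --- we need $bae = 0$, so a little more care is needed: from $beRa = 0$ we get $be \cdot 1 \cdot a = bea = 0$. To reach $bae = 0$, apply right $e$-semicommutativity once more to $ab = 0$ in the form $aRb e = 0$ and note that the real leverage comes from combining reflexivity with the fact that $R$ has identity.

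The cleaner route, which I would actually write, is: from $ab = 0$ and right $e$-semicommutativity, $aRbe = 0$, so in particular $a(Rb e)R \subseteq aR(beR) = 0$ as above, giving $aR(beR) = 0$ and hence by reflexivity $beRaR = 0$; taking $1$ on both sides yields $bea = 0$. Now apply right $e$-semicommutativity to the equation $be\cdot a = 0$? That does not directly help. Instead, observe that right $e$-semicommutative rings satisfy: if $xy = 0$ then $xye = 0$ trivially, but more usefully $e$ is left semicentral? That is not given here. So the correct finishing step is: from $bea = 0$ we get $(be)(a) = 0$, apply right $e$-semicommutativity to obtain $(be)R(ae) = 0$, in particular $be \cdot a e = bea e$; but $bea = 0$ forces $beae = 0$.

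\textbf{I expect the main obstacle} to be exactly this last maneuver: passing from the ``reflexive output'' $beRa = 0$ (or $bea = 0$) to the target $bae = 0$, since $a$ and $be$ need not commute and $e$ need not be central. The resolution is to iterate the semicommutative hypothesis: apply right $e$-semicommutativity to $ab = 0$ to get $aRbe = 0$, pick out $a(e)(be) \subseteq aRbe$, i.e. $aebe = 0$ — still not it. The genuinely workable argument: from $aRbe = 0$, reflexivity applied to $aR(be) = 0$ gives $(be)Ra = 0$, so $bea = 0$; then $b(ea) = 0$ together with right $e$-semicommutativity gives $bR(eae) = 0$, whence $b \cdot a \cdot (eae)$... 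One sees that the honest path is to use reflexivity on $aRbe = 0$ to get $beRa = 0$, then use right $e$-semicommutativity on $ba' = 0$ for a suitable $a'$; I would lay out this chain carefully, tracking each annihilator, with the key insight being that reflexivity turns the ``semicommutative tail'' $be$ into a head, after which one more application of the semicommutative property reattaches $e$ on the right, producing $bae = 0$ and completing the proof that $R$ is right $e$-reversible. \end{proof}
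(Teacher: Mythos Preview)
Your proposal is not a complete proof; it is exploratory scratch-work that circles the target without reaching it. You correctly carry out the first three moves the paper also makes: from $ab=0$, right $e$-semicommutativity gives $aRbe=0$; reflexivity then gives $beRa=0$; taking $1$ in the middle yields $bea=0$. The gap is the final passage from $bea=0$ to $bae=0$, and you explicitly discard the one tool that closes it when you write ``$e$ is left semicentral? That is not given here.'' In fact it \emph{is} available: by \cite[Theorem 2.4]{KUH}, in any right $e$-semicommutative ring the idempotent $e$ is automatically left semicentral, i.e.\ $xe=exe$ for all $x\in R$. The paper invokes exactly this. With it, the finish is clean: from $b(ea)=0$, right $e$-semicommutativity gives $bR(ea)e=bReae=0$; since $ae=eae$ by left semicentrality, $bReae=bRae=0$, and in particular $bae=0$.

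Without left semicentrality of $e$ there is no evident way to commute $e$ past $a$, and each of your attempted reroutings (applying semicommutativity to $(be)a=0$, looking at $beae$, etc.) stalls at the same point. Your closing paragraph describes the intended shape of the argument (``one more application of the semicommutative property reattaches $e$ on the right'') but never executes it; the execution requires precisely the semicentrality fact you set aside. Once you add that citation and the two-line computation above, your outline becomes the paper's proof.
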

\begin{proof} Let $R$ be a right $e$-semicommutative reflexive
ring and $a,b\in R$ such that $ab=0$. Right $e$-semicommutativity
implies $aRbe=0$, and then reflexivity yields $beRa=0$. Since
$bea=0$, we have $bRea=0$, and so $bReae=0$. By \cite[Theorem
2.4.]{KUH}, $e$ is left semicentral. This entails $bRae=0$.
Therefore $bae=0$.
\end{proof}

\begin{rem}{\rm Clearly, every reversible ring is reflexive. But this is not the case for right
$e$-reversible rings. For example, the ring $R$ in Example
\ref{ikie} is right $E_1$-reversible but it is not reflexive, in
fact, for $a=\begin{bmatrix}0&1\\0&1
\end{bmatrix}, b=\begin{bmatrix}1&1\\0&0
\end{bmatrix}\in R$, $aRb=0$ but $bRa\neq 0$.}
\end{rem}

There are also right $e$-reversible rings but not $e$-symmetric.

\begin{ex}{\rm Let $R$ be a reversible ring that is not symmetric.
So $R$ is semicommutative. On the one hand,  by \cite[Examples
2.9(2)]{KUH}, $U_2(R)$ is not $E$-symmetric where
$E=\begin{bmatrix}1 & 0 \\0 & 0\end{bmatrix}$. On the other hand,
$U_2(R)$ is right $E$-reversible as in the proof of Example
\ref{counter}.}
\end{ex}

\begin{lem}\label{eae} Let $R$ be a right $e$-reversible ring and $a\in R$. Then the following hold.
\begin{enumerate}
\item If $ea = 0$, then $aRe = 0$.
\item If $ae = 0$, then $aRe = 0$.
\end{enumerate}
\end{lem}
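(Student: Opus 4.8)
The two claims are close variants, so the plan is to set up the common ingredients first and then dispatch each part. By Theorem \ref{ere}, right $e$-reversibility of $R$ gives us two facts we will use repeatedly: the corner ring $eRe$ is reversible, and $e$ is left semicentral, i.e.\ $xe = exe$ for all $x \in R$. The left semicentrality will be the workhorse, since it lets us move factors of $e$ across elements from the right.

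For part (1), suppose $ea = 0$. I want to show $are = 0$ for every $r \in R$. Since $e$ is left semicentral, $re = ere$, so it suffices to show $a(ere) = 0$, i.e.\ $(ae)(re) = 0$ after inserting — wait, more carefully: $are = a(ere) = (ae)re$ is not obviously zero, so instead I would start from the product $a \cdot (er)$: from $ea = 0$ and right $e$-reversibility applied to the pair $(e, a)$ — actually $ea = 0$ directly gives $ae e = aee = ae = 0$? No: right $e$-reversibility says $ab = 0 \Rightarrow bae = 0$, so from $ea=0$ with $(a,b) = (e,a)$ we get $a e e = ae = 0$. Now $ae = 0$, so we are reduced to part (2). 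Thus part (1) follows from part (2) once we observe $ea = 0 \Rightarrow ae = 0$ via one application of right $e$-reversibility.

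For part (2), suppose $ae = 0$. Fix $r \in R$; I must show $are = 0$. Apply right $e$-reversibility to the pair $(a, e)$? That needs $ae = 0$, which we have, and yields $e a e = 0$, i.e.\ $eae = 0$. Now consider the element $er \in R$ and the product $a \cdot (er)$. Hmm, $a(er)$ need not be zero. The key trick I expect to need: from $ae = 0$, multiply on the right by anything — $aer = 0$ for all $r$. Apply right $e$-reversibility to the pair $(a, er)$: since $a(er) $ is not zero in general this fails. Instead apply it to $(ae, r) $: $ (ae) r = 0$ since $ae = 0$, so right $e$-reversibility gives $r (ae) e = rae = 0$, hence $rae = 0$ for all $r$, i.e.\ $Rae = 0$; in particular $eRae = 0$. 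Then $(ar e)$ — using left semicentrality $are = a(ere) = (ae)re$... still stuck unless I route through the corner ring.

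The cleaner route, which I believe is the intended one: from $ae = 0$ and left semicentrality, for any $r$ write $are = a(ere)$. Consider the two elements $eae = 0$ (shown above) and $ere \in eRe$; since $eae = 0$ we have $(eae)(ere) = 0$ in $eRe$, so by reversibility of $eRe$, $(ere)(eae) = 0$, which gives no new info. The genuinely useful move is: $Rae = 0$ (shown), so $are \cdot$ anything — actually $are$ itself: take $b = re$ and note we want $a b = 0$ type statements. I think the honest plan is: show $Rae = 0$ as above, deduce $aRe \subseteq$ a left annihilator computation, and then use that $a \in R$ and $e$ left semicentral to force $aRe = 0$ by applying right $e$-reversibility once more to a suitably chosen pair such as $(ar, e)$ — since $\text{(we must check } are \cdot \text{something vanishes)}$. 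The main obstacle is exactly this last bookkeeping step: choosing the right pair to feed into the $e$-reversibility hypothesis so that left semicentrality collapses the result to $are = 0$. I would organize the final write-up around the chain $ae = 0 \Rightarrow eae = 0 \Rightarrow Rae = 0$, then apply $e$-reversibility to $(ar,e)$ or exploit that $are = aere$ lies in a context where $Rae = 0$ applies, to conclude.
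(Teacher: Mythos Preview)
Your plan for part (1)---reduce to part (2) by observing that $ea=0$ implies $ae=0$ via one application of right $e$-reversibility---is valid, but the paper's argument is far more direct and does not depend on part (2) at all: from $ea=0$ one has $e(ar)=0$ for every $r\in R$, and a single application of right $e$-reversibility to the pair $(e,ar)$ gives $(ar)e\,e = are = 0$. No corner ring, no left semicentrality, no detour.

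For part (2) you have assembled exactly the right ingredients but stop short of the conclusion, and your proposed final move is the wrong one. You correctly obtain $rae=0$ for all $r$ (your ``$Rae=0$''), and you correctly record the identity $are = aere$ coming from left semicentrality. The missing link is simply this: $rae=0$ says $r\cdot(ae)=0$ for every $r$, so right $e$-reversibility applied to the pair $(r,ae)$ yields $(ae)\cdot r\cdot e = aere = 0$. Combining with $are = aere$ gives $are=0$, and you are done. This is precisely the paper's proof. Your suggested pair $(ar,e)$ cannot work, since feeding it into the hypothesis requires $(ar)e=0$, which is the very thing you are trying to prove.
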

\begin{proof} (1) Assume that $ea = 0$. For any $r\in R$, $ear = 0$.
By hypothesis, $are = 0$.\\(2) Suppose that $ae = 0$. For any
$r\in R$, $aer = 0$. By hypothesis, $erae = 0$. Since $e$ is left
semicentral, $rae = 0$. By right $e$-reversibility of $R$, we have
$aere = 0$. Again,  $e$ being left semicentral yields $are = 0$.
\end{proof}

\begin{cor}\label{hab1} For a ring $R$, consider the following
conditions:
\begin{enumerate}
\item $R$ is right $e$-reversible,
\item The right annihilator $r_R(eR)$ of $eR$ is contained in the left annihilator $l_R(Re)$ of
$Re$,
\item For any nonempty subset $A$ in $R$, $(eR)A = 0$ implies $A(Re) = 0$.
\end{enumerate}
Then {\rm(1)} $\Rightarrow$ {\rm(2)} $\Leftrightarrow$ {\rm(3)}.
\end{cor}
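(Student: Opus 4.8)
The plan is to run through the (short) cycle of implications by unwinding the definitions of the one-sided annihilators $r_R(eR)$ and $l_R(Re)$, using Lemma \ref{eae}(1) for the only implication with any content, namely (1)$\Rightarrow$(2); the equivalence (2)$\Leftrightarrow$(3) is then pure set-theoretic reformulation.

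First, for (1)$\Rightarrow$(2): I would take an arbitrary $a\in r_R(eR)$, so that $era=0$ for every $r\in R$. Specializing $r=1$ gives $ea=0$. Now Lemma \ref{eae}(1) applies verbatim and yields $aRe=0$, which is exactly the assertion $a\in l_R(Re)$. Since $a$ was arbitrary, $r_R(eR)\subseteq l_R(Re)$.

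Next I would handle (2)$\Leftrightarrow$(3). For (2)$\Rightarrow$(3): given a nonempty $A\subseteq R$ with $(eR)A=0$, each element $a\in A$ satisfies $(eR)a=0$, hence $a\in r_R(eR)\subseteq l_R(Re)$ by (2), so $a(Re)=0$; as this holds for every $a\in A$, we get $A(Re)=0$. Conversely, for (3)$\Rightarrow$(2), apply (3) to the singleton $A=\{a\}$ for an arbitrary $a\in r_R(eR)$: then $(eR)A=(eR)a=0$, so (3) gives $a(Re)=0$, i.e. $a\in l_R(Re)$, whence $r_R(eR)\subseteq l_R(Re)$.

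The only point requiring care — and it is a mild one — is noticing that Lemma \ref{eae}(1) needs merely the single equation $ea=0$ rather than the full condition $(eR)a=0$; this is precisely why the implication (1)$\Rightarrow$(2) goes through immediately, with no further appeal to left semicentrality beyond what is already built into Lemma \ref{eae}. I do not expect the converse (2)$\Rightarrow$(1) to hold in general, which is consistent with the asymmetric form of the statement.
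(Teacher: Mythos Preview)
Your proof is correct and essentially identical to the paper's: both derive $ea=0$ from $a\in r_R(eR)$ via the identity, invoke Lemma~\ref{eae} to obtain $aRe=0$, and treat (2)$\Leftrightarrow$(3) as a trivial reformulation. The paper actually cites Lemma~\ref{eae}(2) at the key step (after obtaining $ea=0$), whereas you cite part~(1); your citation is the more natural one, since part~(1) has exactly the hypothesis $ea=0$ in hand, though part~(2) also applies because right $e$-reversibility turns $ea=0$ into $ae=ae\cdot e=0$.
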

\begin{proof} (1) $\Rightarrow$ (2) Let $a\in r_R(eR)$. Then $eRa = 0$.
Since $R$ has an identity, $ea = 0$. By Lemma \ref{eae}(2), $aRe =
0$. It follows that $a\in l_R(Re)$. Thus $r_R(eR)\subseteq
l_R(Re)$.\\(2) $\Rightarrow$ (3)  $(eR)A = 0$ implies $A\subseteq
r_R(eR)$. Since $r_R(eR)\subseteq l_R(Re)$, $A\subseteq l_R(Re)$
and so $A(Re) = 0$.\\
(3) $\Rightarrow$ (2) Clear.
\end{proof}

We now give an example to show that there are rings in which
Corollary \ref{hab1} ({\rm(3)} $\Rightarrow$ {\rm(1)}) does not
hold.
\begin{ex}{\rm Let $R$ be a reduced ring and $S = M_n(R)$. Since $R$ is reduced,
it is reflexive. This yields by \cite[Theorem 2.6.(2)]{KwL} that
$S$ is a reflexive ring entailing that $(eS)A = 0$ implies $A(Se)
= 0$ for any $\emptyset \neq A\subseteq S$ and $e^2=e\in S$. On
the other hand, $S$ is not right $e$-reversible for some
idempotent $e\in S$ by Example \ref{reduced}.}
\end{ex}

Right idempotent reflexivity need not imply right
$e$-reversibility as shown below.

\begin{ex}\label{ucters}{\rm  Let $R$ denote the ring of \cite[Example 3.3]{KwL}. Let $S = F\langle a, b, c\rangle$
be the free algebra with non-commuting indeterminates $a$, $b$ and
$c$ over a field $F$ of characteristic zero and the ideal $I =$
$\langle aSb, a^2 - a\rangle$ and $R = S/I$. Consider $e = a\in$ Id$(R)$. Then
$ab = 0$ but $bae\neq 0$. Hence $R$ is not right $e$-reversible.
Let $h\in R$ and any $e^2 = e\in$ Id$(R)$, it is proved that $hRe
= 0$ implies $eRh = 0$, i.e., $R$ is right idempotent reflexive.
It also follows that for any subset $A$ of $R$, $ARe = 0$ implies
$eRA = 0$. }
\end{ex}

It is well known that every reversible ring is abelian, i.e., every idempotent is central.

\begin{cor} If $R$ is a right $e$-reversible ring and $f\in$ Id$(R)$, then the following hold.
\begin{enumerate}
\item $eRe$ is an abelian ring,
\item $afe = fae$ for any $a\in R$,
\item If $f\in eRe$, then $f$ is left semicentral in $R$.
\end{enumerate}
\end{cor}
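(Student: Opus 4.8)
The plan is to derive all three parts from Theorem~\ref{ere}, which tells us that right $e$-reversibility of $R$ is equivalent to: $e$ is left semicentral and $eRe$ is a reversible ring. Part (1) is then immediate: a reversible ring is abelian (every idempotent central), so $eRe$ is an abelian ring. For part (3), if $f\in eRe$ is idempotent, then $f$ is an idempotent of the reversible ring $eRe$, hence central in $eRe$; combining this with the fact that $e$ is left semicentral in $R$ should upgrade ``central in $eRe$'' to ``left semicentral in $R$''. Concretely, for $a\in R$ I would write $af = a e f$ (since $f=ef$), then use $xe=exe$ for all $x\in R$ (left semicentrality of $e$) to replace $ae$ by $eae$, push $f$ inside to land in $eRe$, use centrality of $f$ there, and unwind back. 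So the main engine is: left semicentrality of $e$ lets one move between $R$ and the corner $eRe$ without loss.

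For part (2), the cleanest route is via Lemma~\ref{eae}. Given $f\in\mathrm{Id}(R)$ and $a\in R$, note $(1-f)f=0$, so by Lemma~\ref{eae}(2) (applied with the element $1-f$, since $(1-f)f=0$ reads as ``$xf=0$'' — actually more directly $f(1-f)=0$ gives, by Lemma~\ref{eae}(1) or (2), $fR(1-f)$-type vanishing) we can squeeze idempotent-orthogonality relations. Let me restate what I actually want: I want $afe=fae$ for all $a\in R$. Multiply the obvious identity $(1-f)af\cdot$ something; more precisely, consider $b=(1-f)af$. Then $fb=0$ and also $bf=(1-f)af$, and I want to show $(afe$ and $fae$ agree$)$, i.e. $(af-fa)fe=0$ after suitable manipulation, wait — $afe-fae$; write $afe-fae = (af-fa)e$ and observe $(af-fa)f = af\cdot f - f\cdot af = af - faf$ while $f(af-fa)=faf-fa$, so $(1-f)(af-fa)f = (af-faf)-(faf-fa)f$... this is getting into exactly the routine calculation I should not grind through. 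The honest plan: let $b=(1-f)af$; then $fb=0$, so by right $e$-reversibility applied through Lemma~\ref{eae}(1) we get $bRe=0$, in particular $be=0$, i.e. $(1-f)afe=0$, so $afe=fafe$. Symmetrically, from $(1-f)a'f=0$-type relations or from $faf(1-f)$ one gets $fae=fafe$ as well, and comparing gives $afe=fae$.

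The step I expect to be the genuine obstacle is part (2): Lemma~\ref{eae} is stated for the fixed idempotent $e$, not for the new idempotent $f$, so I cannot directly invoke ``$fx=0\Rightarrow xRe=0$'' with $f$ in the role of $e$. I must instead feed carefully chosen elements $b$ (built from $a$, $f$, and $1-f$) into the hypothesis ``$R$ is right $e$-reversible'' or into Lemma~\ref{eae}'s conclusions about $e$, so that the desired identity $afe=fae$ falls out. The key algebraic observation making this work is that $f(1-f)=0=(1-f)f$ gives, via Lemma~\ref{eae}, that $fR(1-f)e=0$ and $(1-f)Rfe=0$; setting the middle term to $a$ yields $faf\cdot e = fae$ (from $(1-f)afe=0$... careful, that needs $(1-f)af\in R$ with $f\cdot((1-f)af)=0$, true) and $afe = faf e$ similarly, so both sides equal $fafe$ and hence each other. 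Once part (2) is in hand, parts (1) and (3) are short corollaries of Theorem~\ref{ere} as described above.
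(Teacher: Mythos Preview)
Your overall strategy is sound and, once the stray self-corrections are stripped out, yields a valid proof. Parts (1) and (3) match the paper: both come from Theorem~\ref{ere} (so $e$ is left semicentral and $eRe$ is reversible, hence abelian), and for (3) one pushes $rf=rfe$ into the corner $eRe$ and uses centrality of $f$ there.

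For part (2) your route genuinely differs from the paper's. The paper observes that $fe=efe$ is an idempotent of $eRe$, then uses abelianness of $eRe$ together with left semicentrality of $e$ to compute $rfe=(ere)(efe)=(efe)(ere)=fre$. You instead work directly from right $e$-reversibility: from $f\cdot\big((1-f)r\big)=0$ one gets $(1-f)rfe=0$, i.e.\ $rfe=frfe$; from $(1-f)\cdot(fr)=0$ one gets $fr(1-f)e=0$, i.e.\ $fre=frfe$; hence $rfe=fre$. This is a clean alternative that avoids appealing to Theorem~\ref{ere} or to abelianness of $eRe$ at all, and is essentially the $e$-semicommutativity argument specialized to the idempotent $f$. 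One correction: these implications do \emph{not} come from Lemma~\ref{eae}, whose hypotheses are $ea=0$ or $ae=0$ for the fixed idempotent $e$; they come straight from the defining property of right $e$-reversibility (equivalently, from the Proposition that right $e$-reversible implies right $e$-semicommutative). Once you cite the correct source for $fR(1-f)e=0$ and $(1-f)Rfe=0$, your argument is complete.
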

\begin{proof} Since $R$ is right $e$-reversible, $e$ is left semicentral and $eRe$ is reversible by Theorem
\ref{ere}.\\
%(1) By Theorem \ref{ere}, $e$ is left semicentral and $eRe$ is reversible. Let $eae\in eRe$ and $efe\in$ Id$(eRe)$. Then $((efe)(eae)(efe) - (efe)(eae))(efe) = 0$. Reversibility of $eRe$ implies $(efe)(eae)(efe) - (efe)(eae) = 0$. Similarly, $(efe)((efe)(eae)(efe) - (eae)(efe)) = 0$. Again reversibility of $eRe$ implies $(efe)(eae)(efe) - (eae)(efe) = 0$. Thus $(efe)(eae) = (eae)(efe)$.\\
(1) $eRe$ being reversible implies that it is abelian.\\
(2)  Let $r\in R$. Since $e$ is left semicentral and $fe = efe$ is an idempotent in $eRe$, we have
$rfe = (ere)(efe)$. The abelian property of $eRe$ yields $(ere)(efe) = (efe)(ere)$.
So the left semicentrality of $e$ entails $rfe = (ere)(efe) = (efe)(ere) = fre$.\\
(3) Let $r\in R$. The ring $eRe$ being abelian and $e$ being left
semicentral imply $rf = rfe= erfe= (ere)(efe) = (efe)(ere)(efe) =
frf$.
 \end{proof}

\begin{prop}\label{bul} The following are equivalent for a ring $R$.
\begin{enumerate}
\item $R$ is right $e$-reversible.
\item For any $a$, $b\in R$, if $ab\in$ Id$(R)$, then $bae\in$ Id$(R)$ and $e$ is left semicentral.
\end{enumerate}
\end{prop}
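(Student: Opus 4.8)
The plan is to prove the two implications of Proposition~\ref{bul} separately, leaning on Theorem~\ref{ere} and a little bookkeeping with the idempotent $e$. I do not expect a genuine obstacle; the only point that needs care is choosing the right way to factor $(ab)^2-ab$ before invoking $e$-reversibility.

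\smallskip\noindent\emph{$(1)\Rightarrow(2)$.} Since $R$ is right $e$-reversible, Theorem~\ref{ere} already gives that $e$ is left semicentral, so the content is to show $ab\in\mathrm{Id}(R)$ implies $bae\in\mathrm{Id}(R)$. I would rewrite $(ab)^2=ab$ as $a\cdot\bigl(b(ab-1)\bigr)=abab-ab=0$ and apply right $e$-reversibility to this product to get $\bigl(b(ab-1)\bigr)ae=0$, that is, $babae=bae$. Then, using left semicentrality of $e$ in the form $e(ba)e=(ba)e$ (equivalently $ebae=bae$), I would compute $(bae)^2=(ba)(ebae)=(ba)(bae)=babae=bae$, which is exactly the desired idempotency of $bae$.

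\smallskip\noindent\emph{$(2)\Rightarrow(1)$.} Take $a,b\in R$ with $ab=0$. Then $ab=0\in\mathrm{Id}(R)$, so hypothesis (2) supplies both that $e$ is left semicentral and that $bae\in\mathrm{Id}(R)$. Using $ebae=bae$ again, $(bae)^2=(ba)(ebae)=(ba)(bae)=(ba)^2e=\bigl(b(ab)a\bigr)e=0$ because $ab=0$. Thus $bae$ is an idempotent with square zero, so $bae=(bae)^2=0$; hence $R$ is right $e$-reversible.

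\smallskip The step that needs a moment of thought is the opening move of $(1)\Rightarrow(2)$: one must split $(ab)^2-ab=0$ as $a\cdot b(ab-1)=0$, since the other natural groupings (for instance $(ab-1)\cdot ab=0$) collapse after using $(ab)^2=ab$ and give only a trivial identity. Once the relation $babae=bae$ is available, both directions run on the single observation that left semicentrality of $e$ allows one to replace the factor $ebae$ by $bae$ inside a product; everything else is routine.
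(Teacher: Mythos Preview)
Your proof is correct and follows essentially the same approach as the paper: for $(1)\Rightarrow(2)$ the paper factors $a(1-ba)b=0$ whereas you factor $a\cdot b(ab-1)=0$, but these differ only by a sign and both yield $babae=bae$, after which the idempotency computation via left semicentrality is identical; the $(2)\Rightarrow(1)$ direction is the same argument in both.
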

\begin{proof} (1) $\Rightarrow$ (2) Let $R$ be a right $e$-reversible ring and $a$, $b\in R$ with $ab\in$ Id$(R)$.
By Theorem \ref{ere}, $e$ is left semicentral. Being $ab\in$ Id$(R)$ implies $a(1-ba)b = 0$. Then $(1-ba)bae = 0$ by (1).
Since $e$ is left semicentral, $bae = baebae$. So $bae\in$ Id$(R)$.\\
(2) $\Rightarrow$ (1) Suppose that for any $a$, $b\in R$ being
$ab\in$ Id$(R)$ implies $bae\in$ Id$(R)$. Let $a$, $b\in R$ with
$ab = 0$. Then $ab\in$ Id$(R)$ entailing $bae\in$ Id$(R)$. Hence
$bae = baebae = babae = 0$ by the facts that $ab = 0$ and $e$ is
left semicentral. Thus $bae = 0$. Therefore $R$ is right
$e$-reversible.
\end{proof}

\begin{cor} The following are equivalent for a ring $R$.
\begin{enumerate}
\item $R$ is right $e$-reversible.
\item For any $a$, $b\in R$, if $ab\in$ Id$(R)$, then $abe = bae$.
\end{enumerate}
\end{cor}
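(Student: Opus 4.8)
The plan is to dispatch $(2)\Rightarrow(1)$ in a single line and then reduce $(1)\Rightarrow(2)$ to a short computation inside the corner ring $eRe$. For $(2)\Rightarrow(1)$: if $ab=0$ then $ab$ is the zero idempotent, so $(2)$ gives $bae=abe=0$, which is precisely right $e$-reversibility.

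For $(1)\Rightarrow(2)$, suppose $R$ is right $e$-reversible and $a,b\in R$ with $ab\in\mathrm{Id}(R)$. By Theorem \ref{ere}, $e$ is left semicentral and $eRe$ is reversible, hence abelian (every idempotent of $eRe$ is central). Left semicentrality gives $ae=eae$ and $be=ebe$, and a line of rewriting then yields $abe=(eae)(ebe)$ and $bae=(ebe)(eae)$. Setting $p=eae$ and $q=ebe\in eRe$, the goal $abe=bae$ becomes the commutation $pq=qp$ in $eRe$.

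The crux is to see that, under $(1)$, both $pq$ and $qp$ are idempotent. That $qp=bae\in\mathrm{Id}(R)$ is Proposition \ref{bul} applied to the pair $(a,b)$; that $pq=abe=1\cdot(ab)\cdot e\in\mathrm{Id}(R)$ is Proposition \ref{bul} applied to the pair $(ab,1)$, using $ab\in\mathrm{Id}(R)$. Now $pq$ and $qp$ are central idempotents of $eRe$, so $pq=(pq)^2=p(qp)q=(qp)(pq)$ (pulling the central element $qp$ past $p$), likewise $qp=(pq)(qp)$, and $(pq)(qp)=(qp)(pq)$ by centrality of $pq$; chaining these equalities gives $pq=qp$, that is, $abe=bae$.

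The step I expect to be the main obstacle is noticing that one must establish idempotency of $abe$ in addition to that of $bae$: $bae\in\mathrm{Id}(R)$ is exactly Proposition \ref{bul}, but obtaining the same for $abe$ needs the small trick of reading $abe$ as $1\cdot(ab)\cdot e$ with $(ab)\cdot 1\in\mathrm{Id}(R)$, so that Proposition \ref{bul} applies a second time. Everything else — the rewriting via left semicentrality, and the fact that two elements of an abelian ring whose two-sided products are both idempotent must commute — is routine.
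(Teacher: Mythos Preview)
Your proof is correct and follows essentially the same route as the paper: both arguments pass to the abelian corner ring $eRe$ via Theorem~\ref{ere}, use that $abe=eabe$ and $bae=ebae$ are idempotents---hence central---there, and deduce $abe=bae$ by shuffling factors. The only difference is presentational: the paper runs a single chain of equalities from $bae=(bae)^2$ down to $(abe)^2=ababe=abe$, while you package the endgame as the clean observation that $pq=qp$ whenever $pq$ and $qp$ are idempotent in an abelian ring (incidentally, your ``trick'' for $abe\in\mathrm{Id}(R)$ is not really needed---$(abe)^2=ab\cdot eabe=ab\cdot abe=(ab)^2e=abe$ follows directly from left semicentrality of $e$ and $(ab)^2=ab$).
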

\begin{proof} (1) $\Rightarrow$ (2) Let $R$ be a right $e$-reversible ring and $a$, $b\in R$ with $ab\in$ Id$(R)$.
By Proposition \ref{bul}, $bae\in$ Id$(R)$. By Theorem \ref{ere},
we use the facts that $eRe$ is reversible and so abelian and $e$
is left semicentral to get \begin{center} $bae = baebae =
ebeeaeebeeae = ebe(eabe)eae = (eabe)(ebe)(eae) = (eabe)(ebae) =
(eae)(ebe)(ebae)= (eae)(ebae)(ebe) = eaeebaeebe = (abe)(abe) =
ababe = abe.$ \end{center} (2) $\Rightarrow$ (1) Let $a$, $b\in R$
with $ab=0$. By (2), $abe=bae$, and so $bae=0$. This completes the
proof.
\end{proof}

Let $R$ be a ring. Then $f\in$ Id$(R)$ is called {\it a left
minimal idempotent} if the left ideal $Rf$ is minimal. The set of
all left minimal idempotents of $R$ is denoted by ME$_l(R)$.
Recall that the ring  $R$ is called {\it left min-abel} if either
ME$_l(R)= \emptyset$ or every element of ME$_l(R)$ is left
semicentral. In \cite{LD}, a ring $R$ is {\it left (or right)
quasi-duo} if every maximal left (or right) ideal of $R$ is an
ideal, respectively and $R$ is MELT if every essential maximal
left ideal of $R$ is an ideal. Clearly, every left quasi-duo ring
is MELT.

\begin{thm}\label{ik} Let $R$ be a ring. Then the following are equivalent.
\begin{enumerate}
\item $R$ is a left min-abel ring.
\item $R$ is $e$-symmetric  for each $e\in$ ME$_l(R)$.
\item  $R$ is right $e$-reversible  for each $e\in$ ME$_l(R)$.
\end{enumerate}
\end{thm}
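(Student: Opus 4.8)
The plan is to prove the cycle $(1)\Rightarrow(3)\Rightarrow(2)\Rightarrow(1)$, leaning on Theorem~\ref{ere} and on the fact (\cite[Corollary 4.3]{MW}, recorded in the picture above) that every $e$-symmetric ring is right $e$-reversible. For $(1)\Rightarrow(3)$, fix $e\in\mathrm{ME}_l(R)$; since $R$ is left min-abel, $e$ is left semicentral, so by Theorem~\ref{ere} it suffices to show $eRe$ is reversible. But $Re$ is a minimal left ideal, so $eRe\cong\mathrm{End}_R(Re)^{op}$ (or more directly: $eRe$ is a division ring, because a minimal left ideal generated by an idempotent gives $eRe$ with no nonzero proper left ideals, hence $eRe$ is a division ring by Schur-type reasoning). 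A division ring is certainly reversible, so Theorem~\ref{ere} yields that $R$ is right $e$-reversible.

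For $(3)\Rightarrow(2)$: fix $e\in\mathrm{ME}_l(R)$ and assume $R$ is right $e$-reversible. By Theorem~\ref{ere}, $e$ is left semicentral and $eRe$ is reversible; as above $eRe$ is actually a division ring, so in particular it is reduced, hence symmetric. Now I would check directly that $e$-symmetry holds: suppose $abc=0$ with $a,b,c\in R$. Using left semicentrality of $e$ one pushes the equation into the corner: from $abc=0$ we get $abce=0$, and left semicentrality ($xe=exe$ for all $x$) lets us rewrite $(eae)(ebe)(ece)=eaebecee=\cdots$; the point is that multiplication of elements of the form $eRe$ on the right by arbitrary elements followed by $e$ collapses via $xe=exe$. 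More carefully: $abce=0$ and $e$ left semicentral give $(eabce)=0$, and since $ce=ece$, $be=ebe$, $ae=eae$ we obtain $(eae)(ebe)(ece)=0$ in the ring $eRe$; symmetry of $eRe$ then gives $(eae)(ece)(ebe)=0$, i.e. $eacebe=0$, and unwinding with left semicentrality yields $acbe=0$. Hence $R$ is $e$-symmetric for each $e\in\mathrm{ME}_l(R)$.

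The implication $(2)\Rightarrow(1)$ is the one needing the min-abel definition back out again: assume $R$ is $e$-symmetric for each $e\in\mathrm{ME}_l(R)$, and take any $f\in\mathrm{ME}_l(R)$; I must show $f$ is left semicentral. An $e$-symmetric ring is right $e$-reversible, so $R$ is right $f$-reversible, and then Theorem~\ref{ere} directly gives that $f$ is left semicentral. Since $f$ was an arbitrary element of $\mathrm{ME}_l(R)$, every element of $\mathrm{ME}_l(R)$ is left semicentral, i.e. $R$ is left min-abel (the case $\mathrm{ME}_l(R)=\emptyset$ being vacuous).

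The main obstacle I anticipate is the bookkeeping in $(3)\Rightarrow(2)$: verifying that $abc=0$ forces $acbe=0$ requires carefully tracking how left semicentrality of $e$ lets one replace products like $exe$ by $xe$ and vice versa, and confirming that the reduction lands inside the corner ring $eRe$ where reversibility/symmetry is available. The fact that $Rf$ minimal forces $fRf$ to be a division ring is standard but worth stating cleanly; everything else is a direct appeal to Theorem~\ref{ere} and the $e$-symmetric $\Rightarrow$ right $e$-reversible implication already in hand.
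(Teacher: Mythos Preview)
Your proof is correct, but the route differs from the paper's. The paper argues $(1)\Leftrightarrow(2)$ by simply citing \cite{MW}, then $(2)\Rightarrow(3)$ by Examples~\ref{exs}(3), and finally $(3)\Rightarrow(1)$ by reading off from Theorem~\ref{ere} that $e$ is left semicentral. You instead run the cycle $(1)\Rightarrow(3)\Rightarrow(2)\Rightarrow(1)$ and do the work by hand: you use the Schur-type fact that $eRe$ is a division ring whenever $e\in\mathrm{ME}_l(R)$, feed this into Theorem~\ref{ere} for $(1)\Rightarrow(3)$, and then for $(3)\Rightarrow(2)$ you exploit that left semicentrality makes $r\mapsto re$ a ring homomorphism $R\to eRe$, so symmetry of the division ring $eRe$ pulls back to $e$-symmetry of $R$. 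Your approach is more self-contained and exposes the structural reason the equivalences hold (the corner is a division ring), whereas the paper's is shorter because it outsources $(1)\Leftrightarrow(2)$ entirely to \cite{MW}. One small correction: the implication ``$e$-symmetric $\Rightarrow$ right $e$-reversible'' is Examples~\ref{exs}(3) in this paper, not \cite[Corollary~4.3]{MW} (that corollary is ``right $e$-reduced $\Rightarrow$ $e$-symmetric'').
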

\begin{proof} (1) $\Leftrightarrow$ (2) Clear by \cite{MW}.\\
(2) $\Rightarrow$ (3) By Examples \ref{exs}.\\
(3) $\Rightarrow$ (1) Let $e\in$ ME$_l(R)$. Then $e$ is left
minimal idempotent. By Theorem \ref{ere}, $e$ is left semicentral.
We invoke the definition of left min-abel ring to obtain that $R$
is a left min-abel ring.
\end{proof}

In \cite{MW}, it is proved that $R$ is a left quasi-duo ring if
and only if $R$ is a left min-abel MELT ring. In this vein, we
state and prove the following.
\begin{thm}\label{ik1} Let $R$ be a ring. Then the following are equivalent.
\begin{enumerate}
\item $R$ is a left quasi-duo ring.
\item $R$ is a right $e$-reversible MELT ring for each $e\in ME_l(R)$.
\end{enumerate}
\end{thm}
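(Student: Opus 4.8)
The plan is to derive this statement purely from the known characterization of left quasi-duo rings together with Theorem \ref{ik}, so that no new computation is needed. Recall that \cite{MW} establishes that a ring $R$ is left quasi-duo if and only if $R$ is a left min-abel MELT ring, while Theorem \ref{ik} shows that $R$ is left min-abel if and only if $R$ is right $e$-reversible for every $e\in$ ME$_l(R)$. Since the MELT condition does not refer to any idempotent, condition (2) is to be read as: $R$ is a MELT ring and, for each $e\in$ ME$_l(R)$, $R$ is right $e$-reversible. The strategy is then simply to splice these two equivalences together at the ``left min-abel'' node.

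For (1) $\Rightarrow$ (2): assuming $R$ is left quasi-duo, \cite{MW} gives that $R$ is both left min-abel and MELT; applying the implication (1) $\Rightarrow$ (3) of Theorem \ref{ik} to the left min-abel hypothesis yields that $R$ is right $e$-reversible for every $e\in$ ME$_l(R)$, and combined with MELT this is exactly (2). Conversely, for (2) $\Rightarrow$ (1): from the hypothesis that $R$ is right $e$-reversible for each $e\in$ ME$_l(R)$, the implication (3) $\Rightarrow$ (1) of Theorem \ref{ik} gives that $R$ is left min-abel; since $R$ is also MELT by hypothesis, \cite{MW} forces $R$ to be left quasi-duo.

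There is essentially no technical obstacle here, as the substantive work has already been carried out in Theorem \ref{ik} and in \cite{MW}. The only points that warrant care are the parsing of condition (2) noted above and the boundary case ME$_l(R)=\emptyset$: in that case ``$R$ is right $e$-reversible for each $e\in$ ME$_l(R)$'' holds vacuously and ``$R$ is left min-abel'' holds by definition, so the two equivalences still chain correctly. It would also be worth remarking, though it is not needed for the proof, that the MELT hypothesis is genuinely required, since a left min-abel ring need not be left quasi-duo.
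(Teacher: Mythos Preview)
Your proposal is correct and follows essentially the same route as the paper: both directions are obtained by combining the characterization ``left quasi-duo $\Leftrightarrow$ left min-abel MELT'' (the paper invokes \cite[Theorem 1.2]{Wei2} and \cite[Corollary 2.6]{MW} for the two implications, you cite \cite{MW} uniformly) with Theorem \ref{ik}. Your added remarks on parsing condition (2) and on the vacuous case ME$_l(R)=\emptyset$ are accurate and harmless, though the paper does not spell them out.
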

\begin{proof} (1) $\Rightarrow$ (2) Let $R$ be a left quasi-duo ring. By definition, $R$ is MELT and $R$ is left min-abel by
\cite[Theorem 1.2]{Wei2}. Theorem \ref{ik} implies that $R$ is
right $e$-reversible.\\ (2) $\Rightarrow$ (1) Suppose that $R$ is
a right $e$-reversible MELT ring for each $e\in ME_l(R)$. By
Theorem \ref{ik}, $R$ is a left min-abel ring. Hence $R$ is a left
quasi-duo ring by \cite[Corollary 2.6]{MW}.
\end{proof}

In the next example, we illustrate that for any ring $R$ and its
idempotent $e$ and ideal $I$, the ring $R/I$ being right
$\overline{e}$-reversible need not imply $R$ being right
$e$-reversible, and then we investigate under which condition this
property satisfies.

\begin{ex}{\rm Consider the ring $R=U_2(\Bbb Z)$ and its idempotent $E=
\begin{bmatrix}0&0\\0&1
\end{bmatrix}$. By Example \ref{ikie}, $R$ is not right $E$-reversible. Consider the ideal $I=\begin{bmatrix}\Bbb Z&\Bbb Z\\0&0
\end{bmatrix}$ of $R$. Then $R/I=\left\{\begin{bmatrix}0&0\\0&a
\end{bmatrix}+I\mid  a\in \Bbb Z\right\}$. For any $A=\begin{bmatrix}0&0\\0&a
\end{bmatrix}+I, B=\begin{bmatrix}0&0\\0&b
\end{bmatrix}+I\in R/I$ with $AB=0$, we have $ab=0$ entailing that $a=0$ or $b=0$. It follows that $A=0$ or $B=0$.
Hence $BA\overline{E}=0$. Therefore $R/I$ is right
$\overline{E}$-reversible.}
\end{ex}

\begin{lem}\label{semicentral} Let $R$ be a ring, $e\in$ Id$(R)$ and $I$ an ideal of $R$ with $I$ reduced as a ring without
identity. If $R/I$ is right $\overline e$-reversible, then $e$ is
left semicentral.
\end{lem}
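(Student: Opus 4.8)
The plan is to work in two stages: first pass to the quotient $R/I$ to extract left semicentrality of $\overline{e}$, and then lift that information back to $R$ using the hypothesis that $I$ is reduced. Since $R/I$ is right $\overline{e}$-reversible, Theorem \ref{ere} (applied to the ring $R/I$ with the idempotent $\overline{e}$) shows that $\overline{e}$ is left semicentral in $R/I$. Unwinding this, it says precisely that $ae-eae\in I$, i.e. $(1-e)ae\in I$, for every $a\in R$. The whole task then reduces to upgrading this membership to the genuine equality $(1-e)ae=0$, which is exactly the assertion that $e$ is left semicentral in $R$. This upgrade is the one place where reducedness of $I$ enters, and it is essentially the only substantive step.

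To carry out the upgrade, I would fix an arbitrary $a\in R$ and set $x:=(1-e)ae$, noting $x\in I$ by the previous paragraph. Two elementary manipulations using only $e^2=e$ and associativity give $ex=e(1-e)ae=0$ and $xe=(1-e)ae^2=(1-e)ae=x$. Combining them, $x^2=(xe)x=x(ex)=0$, so $x$ is a square-zero element of the ring $I$. Because $I$ is reduced as a ring without identity, it has no nonzero nilpotents, and hence $x=0$, that is, $(1-e)ae=0$. As $a$ was arbitrary, this yields $(1-e)Re=0$, equivalently $ae=eae$ for all $a\in R$, which is the definition of $e$ being left semicentral.

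I do not expect a real obstacle here; the only point requiring a little care is that the three displayed identities stay inside $R$ and never tacitly use $e\in I$ or an identity element of $I$ — they follow purely from $e^2=e$ and associativity — while the final step invokes only the absence of nonzero nilpotents in $I$, a notion that makes perfect sense for a ring without identity. The mild ``hard part'' is simply the observation that $x=(1-e)ae$ is square-zero; once that is seen, reducedness of $I$ closes the argument immediately.
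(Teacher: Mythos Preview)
Your proposal is correct and essentially matches the paper's proof: the paper directly applies the definition of right $\overline{e}$-reversibility to $\overline{e}\,\overline{(1-e)}\,\overline{r}=0$ to obtain $re-ere\in I$, while you reach the same conclusion by citing Theorem~\ref{ere}; in both cases the argument finishes by observing that $re-ere$ (your $x$) is square-zero in the reduced ideal $I$.
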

\begin{proof} Let $r\in R$. Since $\overline e \overline{(1-e)}\overline
r=0$ and $R/I$ is right $\overline e$-reversible, we have
$\overline{(1-e)} \overline r \overline e=0$. Hence $re-ere\in I$.
Since $(re-ere)^2=0$ and $I$ is reduced, $re=ere$. Therefore $e$
is left semicentral.
\end{proof}

\begin{thm}
Let $R$ be a ring with an ideal $I$ and $e\in$ Id$(R)$. If $R/I$
is right $\bar{e}$-reversible and $I$ is a reduced ring without
identity, then $R$ is right $e$-reversible.
\end{thm}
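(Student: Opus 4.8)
The plan is to verify right $e$-reversibility of $R$ straight from the definition. Fix $a,b\in R$ with $ab=0$; the goal is $bae=0$. The first step is to locate $bae$ inside $I$: reducing modulo $I$ turns $ab=0$ into $\bar a\bar b=0$ in $R/I$, and right $\bar e$-reversibility of $R/I$ then gives $\bar b\bar a\bar e=0$, that is, $bae\in I$. Since $I$ is reduced, it now suffices to prove that $bae$ is nilpotent, and I will in fact obtain $(bae)^2=0$.

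The nilpotency step requires $e$ to be left semicentral in $R$, and this is exactly Lemma \ref{semicentral}, whose hypotheses are in force here ($I$ is a reduced ideal without identity and $R/I$ is right $\bar e$-reversible). Thus $xe=exe$ for every $x\in R$; applied to $x=ba$ this reads $bae=ebae$. Now expand $(bae)^2$ and use this relation:
\[
(bae)^2=baebae=(ba)(ebae)=(ba)(bae)=b(ab)ae=0 ,
\]
the last equality by $ab=0$. Hence $bae$ is a square-zero element of the reduced ring $I$, so $bae=0$, which is exactly what was required; therefore $R$ is right $e$-reversible.

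I do not expect a genuine obstacle: the argument is one short computation together with two facts already available — right $\bar e$-reversibility of the quotient forces $\overline{bae}=0$, and Lemma \ref{semicentral} forces left semicentrality of $e$. The only point that demands care is the bookkeeping of where the idempotent sits: one must extract from the quotient precisely the relation $\overline{bae}=0$, and one must apply semicentrality in the form $xe=exe$ so that the substring $ebae$ collapses to $bae$ (and not, say, to $eba$); this is also why the relevant input is Lemma \ref{semicentral} giving \emph{left} rather than right semicentrality of $e$.
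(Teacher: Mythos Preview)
Your proof is correct and follows exactly the same route as the paper: pass to $R/I$ to place $bae$ in $I$, invoke Lemma~\ref{semicentral} to get left semicentrality of $e$, and then use $ebae=bae$ to compute $(bae)^2=baebae=babae=0$, concluding by reducedness of $I$.
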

\begin{proof}
Let $a, b\in R$ with $ab=0$. Then $\bar{a}\bar{b}=0$ in $R/I$.
Since $R/I$ is right $\bar{e}$-reversible, $\bar{b}\bar{a}\bar{e}=
0$. So $bae\in I$. By  Lemma \ref{semicentral}, $e$ is left
semicentral in $R$. It gives rise to $(bae)^2=baebae=babae=0$. The
ideal $I$ being reduced implies $bae=0$. It follows that $R$ is
right $e$-reversible.
\end{proof}

It is known by \cite[Example 2.1]{KiL}, there exists a ring $R$
and its ideal $I$ such that $R$ is reversible, but $R/I$ is not
reversible. Since reversibility and $1$-reversibility are the
same, we deduce that the ring $R$ being right $e$-reversible need
not imply $R/I$ being right $\overline{e}$-reversible. In the next
result, we show when we have an affirmative answer.

\begin{prop} Let $R$ be an $e$-symmetric ring and $I$ an ideal of
$R$ with $I=r_R(J)$ for some subset $J$ of $R$. Then $R/I$ is
right $\overline{e}$-reversible.
\end{prop}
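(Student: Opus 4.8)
The plan is to exploit the fact that an $e$-symmetric ring is right $e$-reversible (Examples \ref{exs}(3)), so in particular $e$ is left semicentral, and then to transfer the reversibility statement from $R$ to $R/I$ using the annihilator description $I=r_R(J)$. First I would take $\bar a,\bar b\in R/I$ with $\bar a\bar b=\bar 0$, i.e. $ab\in I$, so $Jab=0$, i.e. $(ja)b=0$ for every $j\in J$. Then for each $j\in J$ and each $r\in R$ I would apply the $e$-symmetric hypothesis: from $(ja)(b)(1)=0$ or more usefully from $(ja)(br)=0$ deduced via semicommutativity—actually, since $R$ is $e$-symmetric it is right $e$-semicommutative, hence right $e$-reversible, so $(ja)b=0$ gives $b(ja)e=0$, i.e. $(bja)e=0$ for all $j\in J$.

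The goal is to show $bae\in I=r_R(J)$, i.e. $J(bae)=0$, i.e. $jbae=0$ for all $j\in J$. I would get this by feeding $(bja)e=0$ back into the $e$-reversibility / $e$-symmetry machinery once more, inserting an arbitrary ring element so as to move the leading $j$ to the correct side. Concretely, from $ab\in I$ one has $jab=0$ for all $j\in J$; apply right $e$-semicommutativity (valid since $R$ is $e$-symmetric, hence right $e$-semicommutative by the implication chain in the excerpt) to $(ja)(b)=0$ to obtain $(ja)R(be)=0$, in particular $jab'be=0$ is not quite what is needed—rather I would use the $e$-symmetric identity $xyz=0\Rightarrow xzye=0$ directly on the product $j\cdot a\cdot b=0$ to get $j\cdot b\cdot a\cdot e=0$, i.e. $jbae=0$ for every $j\in J$. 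That is exactly $bae\in r_R(J)=I$.

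Having shown $bae\in I$, I would finish by passing to the quotient: $\overline{bae}=\bar 0$ in $R/I$, i.e. $\bar b\bar a\bar e=\bar 0$, which is precisely right $\bar e$-reversibility of $R/I$ (one should also note $\bar e\neq\bar 0$ is not required by the definition in the same strict sense, or handle it by noting $e\notin I$ since $e$ is an idempotent and $I$ need not contain it; if $e\in I$ the statement is vacuous or trivial as $\bar e=0$, and one can remark this). So the structure is: (i) record that $e$-symmetric $\Rightarrow$ $e$ left semicentral and $\Rightarrow$ right $e$-semicommutative; (ii) unwind $ab\in I$ to $jab=0$ for all $j\in J$; (iii) apply the $e$-symmetric implication with $x=j$, $y=a$, $z=b$ to get $jbae=0$ for all $j\in J$; (iv) conclude $bae\in r_R(J)=I$, hence $R/I$ is right $\bar e$-reversible.

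The main obstacle will be step (iii): making sure the $e$-symmetric hypothesis can be applied in the exact configuration needed, since $e$-symmetry as defined moves $b$ and $c$ past each other in a triple product $abc$ and tacks an $e$ on the right. The product $jab$ is a genuine triple product, so $jab=0$ yields $jbae=0$ directly—this is clean. The subtler point is whether one needs an auxiliary insertion of a ring element (to handle the "for any $a,b$" against "for any subset $J$"), but since $J$ is fixed and we range over all $j\in J$, no insertion is needed and the annihilator identity $r_R(J)$ closes the argument immediately. I would also double-check that $\bar e$ is a well-defined idempotent of $R/I$ and address the degenerate case $\bar e=\bar 0$ with a one-line remark.
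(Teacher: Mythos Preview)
Your proposal is correct and, once the exploratory detours are stripped away, follows exactly the paper's argument: from $\bar a\bar b=0$ deduce $Jab=0$, apply $e$-symmetry to each triple $j\cdot a\cdot b=0$ to obtain $jbae=0$, hence $bae\in r_R(J)=I$ and $\bar b\bar a\bar e=0$. The auxiliary remarks about left semicentrality, $e$-semicommutativity, and the degenerate case $\bar e=0$ are unnecessary for the proof itself.
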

\begin{proof} Let $\overline{a}, \overline{b}\in R/I$ such that $\overline{a}\overline{b}=0$. It
follows that $Jab=0$. The $e$-symmetricity of $R$ entails
$Jbae=0$. Hence $bae\in I$. This implies
$\overline{b}\overline{a}\overline{e}=0$. Therefore $R/I$ is right
$e+I$-reversible.
\end{proof}

\begin{thm}\label{prod}  Let $(R_i)_{i\in I}$ be a family of rings for some index set $I$ and $e_i^2 = e_i\in R_i$ for each $i\in I$  and set $e = (e_i)\in \prod_{i\in I}R_i$. Then $R_i$ is right $e_i$-reversible for each $i\in I$ if and only if $\prod_{i\in I}R_i$ is right $e$-reversible.
\end{thm}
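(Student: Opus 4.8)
The plan is to argue directly from the definition, exploiting that multiplication in $\prod_{i\in I}R_i$ is computed coordinatewise and that an element $(x_i)$ of the product is zero precisely when every $x_i$ is zero. In particular $e\neq 0$ exactly when some $e_i\neq 0$; since the phrase ``$R_i$ is right $e_i$-reversible for each $i$'' presupposes each $e_i$ is a nonzero idempotent, we may take all $e_i\neq 0$ throughout, whence $e\neq 0$ as required by the definition applied to the product.

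For the forward implication, assume each $R_i$ is right $e_i$-reversible. Given $a=(a_i)$, $b=(b_i)$ in $\prod_{i\in I}R_i$ with $ab=0$, reading this coordinatewise says $a_ib_i=0$ in $R_i$ for every $i$; right $e_i$-reversibility of $R_i$ then gives $b_ia_ie_i=0$ for every $i$, and reassembling the coordinates yields $bae=(b_ia_ie_i)=0$. Hence $\prod_{i\in I}R_i$ is right $e$-reversible.

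For the converse, suppose $\prod_{i\in I}R_i$ is right $e$-reversible and fix $j\in I$. Take $a_j,b_j\in R_j$ with $a_jb_j=0$, and form $a,b\in\prod_{i\in I}R_i$ whose $j$-th coordinates are $a_j,b_j$ respectively and whose remaining coordinates are $0$. Then $ab=0$ in the product (every coordinate is zero, the $j$-th being $a_jb_j$), so $bae=0$ by hypothesis, and reading off the $j$-th coordinate gives $b_ja_je_j=0$. Thus $R_j$ is right $e_j$-reversible, and since $j\in I$ was arbitrary we are done.

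There is essentially no serious obstacle here: the only points that need care are the bookkeeping around the nonzero-idempotent hypothesis on both sides and the legitimacy of the ``extend by zero'' trick for the converse, which is fine because the product carries coordinatewise operations. If one prefers a more conceptual route, the statement also follows from Theorem \ref{ere} together with the elementary facts that $e\big(\prod_{i\in I}R_i\big)e\cong\prod_{i\in I}e_iR_ie_i$, that a direct product of rings is reversible if and only if each factor is reversible, and that $e=(e_i)$ is left semicentral in $\prod_{i\in I}R_i$ exactly when each $e_i$ is left semicentral in $R_i$.
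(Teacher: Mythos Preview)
Your proof is correct and follows essentially the same coordinatewise argument as the paper: the forward direction reads $ab=0$ componentwise and applies right $e_i$-reversibility in each factor, while the converse uses the extend-by-zero trick to embed a single pair $a_j,b_j$ into the product and then projects back. Your additional remarks on the nonzero-idempotent bookkeeping and the alternative route via Theorem~\ref{ere} are fine embellishments, but the core argument matches the paper's exactly.
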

\begin{proof} Assume that $R_i$ is right $e_i$-reversible
for each $i\in I$. Let $a = (a_i)$, $b = (b_i)\in \prod_{i\in
I}R_i$ with $ab = 0$. Then $a_ib_i = 0$ for each $i\in I$. By
assumption $b_ia_ie_i = 0$ for each $i\in I$. Then $bae = 0$.
Conversely, suppose that  $\prod_{i\in I}R_i$ is right
$e$-reversible. Let $a_i$, $b_i\in R_i$ with $a_ib_i = 0$. Let $a
= (a_i)\in \prod_{i\in I}R_i$ with $i^{th}$ component is $a_i$,
elsewhere is 0 and  $b = (b_i)\in \prod_{i\in I}R_i$ with $i^{th}$
component is $b_i$, elsewhere is 0. Then $ab = 0$. By supposition
$bae = 0$. Hence $b_ia_ie_i = 0$. So $R_i$ is right
$e_i$-reversible for each $i\in I$.
\end{proof}

Recall that in \cite{MW}, a ring $R$ is called {\it right} (resp.
{\it left}) {\it e-reduced} if $N(R)e=0$ (resp. $eN(R)=0$), and
$R$ is called {\it $e$-symmetric} if $abc = 0$ implies $acbe = 0$
for all $a$, $b$, $c\in R$. By \cite[Corollary 4.3]{MW}, the
converse statement holds in case that $R$ is semiprime.

%\begin{thm}\label{sem} Let $R$ be a right $e$-reversible semiprime ring. Then $R$ is right and left $e$-reduced.??
%Let $a\in R$ with $a^2 = 0$. Then $ae = 0$ and $ea = 0$.
%\end{thm}
%\begin{proof} Let $a\in R$ such that $a^2=0$. Then $aaer=0$ for
%every $r\in R$. Since $R$ is right $e$-reversible, $aerae=0$.
%Hence $aeRae=0$. Since $R$ is semiprime, we have $ae=0$. On the
%other hand, since $e$ is left semicentral, $earea=eaerea=0$ for
%any $r\in R$. Thus $eaRea=0$, and so $ea=0$.
%\end{proof}
\begin{thm}\label{reg} Let $R$ be a semiprime ring. Then the following are equivalent.
\begin{enumerate}
\item $R$ is right $e$-reversible,
\item $R$ is right $e$-reduced,
\item $R$ is $e$-symmetric,
\item $R$ is right $e$-semicommutative.
\end{enumerate}
\end{thm}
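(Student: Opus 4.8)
The plan is to prove the four conditions equivalent by establishing the cycle $(2)\Rightarrow(3)\Rightarrow(4)\Rightarrow(1)\Rightarrow(2)$, since three of these arrows are already available in essentially full generality. Indeed, $(2)\Rightarrow(3)$ is \cite[Corollary 4.3]{MW} (right $e$-reduced rings are $e$-symmetric), $(3)\Rightarrow(1)$ holds by Examples \ref{exs}(3), and $(1)\Rightarrow(4)$ is the proposition stating that every right $e$-reversible ring is right $e$-semicommutative. Chaining these gives $(2)\Rightarrow(3)\Rightarrow(1)\Rightarrow(4)$ and $(3)\Rightarrow(1)$ for free, so the only genuine work is to close the loop by proving $(4)\Rightarrow(2)$ under the semiprimeness hypothesis, i.e. that a semiprime right $e$-semicommutative ring is right $e$-reduced.

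For $(4)\Rightarrow(2)$ I would argue as follows. First, in a right $e$-semicommutative ring $e$ is left semicentral: from $e(1-e)=0$ we get $eR(1-e)e=0$, hence $(1-e)e=0$ trivially, but more to the point applying right $e$-semicommutativity to $e\cdot(1-e)=0$ gives $eR(1-e)e = 0$; one should instead run the standard argument (as in \cite[Theorem 2.4]{KUH}) that right $e$-semicommutativity forces $e$ to be left semicentral, so $xe = exe$ for all $x$. Now let $a\in N(R)$, say $a^n=0$ with $n$ minimal, and set $b=a^{n-1}$, so $b^2=0$. Right $e$-semicommutativity applied to $b\cdot b=0$ yields $bRbe=0$. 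Using left semicentrality, replace $r$ by $re$ to get $breb\,e = brebe$, and then post-multiply/conjugate to produce an expression of the form $(be R)(be) = 0$ or $be R be = 0$; more precisely, from $bRbe=0$ and $xe=exe$ one deduces $(be)R(be)=0$, whence semiprimeness gives $be = 0$, i.e. $a^{n-1}e = 0$. Then induct downward: assuming $a^{k}e = 0$, apply right $e$-semicommutativity and semiprimeness again to the relation $a\cdot a^{k-1}$ suitably arranged to conclude $a^{k-1}e = 0$, ultimately reaching $ae = 0$. Since $a\in N(R)$ was arbitrary, $N(R)e = 0$, which is condition (2).

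The main obstacle is the downward induction step: squeezing a ``$be R be = 0$'' type relation out of $bRbe = 0$ so that semiprimeness can be invoked, and then handling the inductive passage from $a^{k}e=0$ to $a^{k-1}e=0$ cleanly. The trick is exactly the one already used in the proof of Theorem \ref{thm}: right $e$-semicommutativity gives $brbe = 0$ for all $r$, left semicentrality lets one replace $r$ by $er$ to obtain $berbe = 0$ for all $r$, and semiprimeness then forces $be = 0$; the induction on the nilpotency index of $a$ proceeds verbatim as there, replacing ``right $e$-reversible'' hypotheses by ``right $e$-semicommutative'' ones (which is legitimate because right $e$-semicommutativity is the weaker hypothesis and already yields $brbe=0$ directly). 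Once $(4)\Rightarrow(2)$ is in hand, the cycle closes and all four statements coincide.
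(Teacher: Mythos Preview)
Your argument is correct, but it is organised differently from the paper's. The paper proves $(1)\Rightarrow(2)$ directly: starting from $a^n=0$ with $n=2k$, it applies right $e$-reversibility three times to $a^k(a^kr)=0$ to obtain $(a^ke)R(a^ke)=0$, uses semiprimeness to get $a^ke=0$, then left semicentrality to rewrite this as $(ae)^k=0$ and iterates by halving. The remaining equivalences $(2)\Leftrightarrow(3)\Leftrightarrow(4)$ are simply quoted from \cite[Proposition 2.10]{KUH}, and $(2)\Rightarrow(1)$ is Examples~\ref{exs}. You instead take the general implications $(2)\Rightarrow(3)\Rightarrow(1)\Rightarrow(4)$ for granted and supply $(4)\Rightarrow(2)$ by hand, which amounts to reproving the relevant half of \cite[Proposition 2.10]{KUH}. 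Your route has the virtue of being self-contained and of working from the weakest hypothesis; the paper's route keeps the new content focused on the $e$-reversible condition that is the subject of the article.

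Two small points on execution. First, in passing from $bRbe=0$ to $(be)R(be)=0$ you should substitute $r\mapsto er$ (not $r\mapsto re$), and no appeal to left semicentrality is needed at that stage: $b(er)be=(be)r(be)$ already. Second, your ``downward induction'' is genuine but deserves one line of justification: from $a^{k}e=0$ write $a\cdot(a^{k-1}e)=0$, apply right $e$-semicommutativity to get $aRa^{k-1}e=0$, specialise $r\mapsto a^{k-2}s$ to obtain $a^{k-1}Ra^{k-1}e=0$, hence $(a^{k-1}e)R(a^{k-1}e)=0$ and semiprimeness gives $a^{k-1}e=0$. Alternatively, since right $e$-semicommutativity forces $e$ to be left semicentral (\cite[Theorem 2.4]{KUH}), one has $(ae)^{n-1}=a^{n-1}e=0$ and an ordinary induction on the nilpotency index finishes; this is the form of the induction used in the proof of Theorem~\ref{thm}.
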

\begin{proof} (1) $\Rightarrow$ (2) Let $a\in R$ with $a^n = 0$ for some integer $n$. We may assume that $n = 2k$
for some integer $k\geq 1$. Then $a^k(a^kr)=0$ for any $r\in R$.
By hypothesis, $(a^kr)a^ke = 0$. The ring $R$ being right
$e$-reversible implies that $(ra^ke)a^ke = 0$. Again using the
right $e$-reversibility of $R$ we get $(a^ke)r(a^ke) = 0$ for all
$r\in R$. We invoke here $R$ to be a semiprime ring, we get $a^ke
= 0$. By the left semicentrality of $e$, we have $(ae)^k = 0$.
If $k\neq 1$, we may assume that $k = 2l$ for some integer $l\geq 1$.
By a similar discussion, we reach $(ae)^l = 0$. Continuing in this way, we may have $ae = 0$. Therefore $R$ is right $e$-reduced.\\
(2) $\Rightarrow$ (1) Clear by Examples \ref{exs}. (2)
$\Leftrightarrow$ (3) $\Leftrightarrow$ (4)  is proved in
\cite[Proposition 2.10]{KUH}.
\end{proof}

Since every von Neumann regular ring is semiprime, we have the
following result as an immediate consequence of Theorem \ref{reg}.

\begin{cor}\label{spri} Let $R$ be a von Neumann regular ring. Then the following are equivalent.
\begin{enumerate}
\item $R$ is right $e$-reversible,
\item  $R$ is right $e$-reduced,
\item  $R$ is $e$-symmetric,
\item $R$ is right $e$-semicommutative.
\end{enumerate}
\end{cor}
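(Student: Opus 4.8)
The plan is to derive this corollary directly from Theorem~\ref{reg} together with the standard fact that every von Neumann regular ring is semiprime. Concretely, I would first recall that if $R$ is von Neumann regular, then for every $a\in R$ there is $x\in R$ with $a=axa$; if $aRa=0$ then in particular $a=axa=0$, so $R$ has no nonzero ideal $A$ with $A^2=0$ (indeed no nonzero element $a$ with $aRa=0$), hence $R$ is semiprime. This is the only nontrivial observation needed, and even it is entirely routine.

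Once semiprimeness is in hand, the four conditions are literally the four conditions of Theorem~\ref{reg}, so the equivalences $(1)\Leftrightarrow(2)\Leftrightarrow(3)\Leftrightarrow(4)$ follow by simply applying that theorem to $R$. I would therefore phrase the proof as: ``Every von Neumann regular ring is semiprime. Hence the assertion follows immediately from Theorem~\ref{reg}.'' No new idempotent bookkeeping, no left-semicentrality argument, and no nilpotency induction are required here, since all of that machinery is already packaged inside Theorem~\ref{reg}.

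There is essentially no obstacle: the corollary is a specialization of the preceding theorem to a subclass of semiprime rings, and the paper has already flagged exactly this in the sentence preceding the statement. The only thing to be careful about is to state the semiprimeness implication correctly — it is ``von Neumann regular $\Rightarrow$ semiprime'', not the reverse — and to make sure the four enumerated conditions in the corollary match, in order, the four conditions in Theorem~\ref{reg}, which they do (right $e$-reversible, right $e$-reduced, $e$-symmetric, right $e$-semicommutative). With that checked, the proof is a one-line invocation.

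\begin{proof}
Every von Neumann regular ring is semiprime: if $a\in R$ satisfies $aRa=0$, then writing $a=axa$ for some $x\in R$ gives $a=a(xa)\in aRa=0$, so $a=0$; in particular $R$ has no nonzero nilpotent ideal. Therefore the equivalence of (1)--(4) is immediate from Theorem~\ref{reg}.
\end{proof}
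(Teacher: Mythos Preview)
Your proposal is correct and matches the paper's approach exactly: the paper simply notes that every von Neumann regular ring is semiprime and invokes Theorem~\ref{reg}, which is precisely what you do (with the added courtesy of spelling out the semiprimeness argument).
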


\begin{prop}\label{pri} A ring $R$ is  right $e$-reversible and prime if and only if $R$ is a domain.
\end{prop}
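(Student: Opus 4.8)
The plan is to dispatch the ``if'' direction directly and to reduce the ``only if'' direction to the classical fact that a reversible prime ring is a domain; the crucial observation is that primeness forces the idempotent $e$ to equal $1$.

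\emph{The ``if'' direction.} If $R$ is a domain, then $ab=0$ implies $a=0$ or $b=0$, hence $ba=0$, so $R$ is reversible; moreover a domain is prime and its only nonzero idempotent is $1$, so $R$ is right $1$-reversible, i.e. right $e$-reversible with $e=1$.

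\emph{The ``only if'' direction.} Suppose $R$ is right $e$-reversible and prime. By Theorem \ref{ere}, $e$ is left semicentral, i.e. $ae=eae$ for all $a\in R$; equivalently $(1-e)Re=0$. Since $R$ is prime and $e\neq 0$, the equality $(1-e)Re=0$ forces $1-e=0$, that is, $e=1$. Consequently ``right $e$-reversible'' says exactly that $ab=0$ implies $ba=0$, so $R$ is reversible. Now let $a,b\in R$ with $ab=0$. Reversibility gives $ba=0$, whence $bar=0$ for every $r\in R$; applying reversibility to the product $b(ar)=0$ yields $arb=0$ for every $r$, that is, $aRb=0$ (equivalently, one may simply invoke that reversible rings are semicommutative, as in the implication diagram above). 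Primeness now gives $a=0$ or $b=0$, so $R$ has no nonzero zero-divisors and is a domain.

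The only step that needs a moment's thought is the identification $e=1$: one must recognize that left semicentrality of $e$ is precisely the vanishing of the ``ideal-like'' product $(1-e)Re$ and then apply the defining property of a prime ring. Everything after that is routine, so I do not anticipate a genuine obstacle.
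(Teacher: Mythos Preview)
Your argument is correct, but it follows a different path from the paper's. The paper never explicitly identifies $e=1$; instead it starts with $ab=0$, applies right $e$-reversibility to $abr=0$ to obtain $brae=0$ for all $r$, hence $bRae=0$, and then invokes primeness to conclude $b=0$ or $ae=0$; in the latter case it uses left semicentrality of $e$ to pass from $ae=0$ to $aRe=0$, and primeness again yields $a=0$. Your route is the more structural one: you use left semicentrality of $e$ (Theorem~\ref{ere}) up front to get $(1-e)Re=0$, let primeness force $e=1$, and thereby reduce the whole statement to the classical fact that a reversible prime ring is a domain. Both proofs pivot on the same two ingredients (left semicentrality from Theorem~\ref{ere} plus primeness), just applied in a different order; your version has the advantage of making transparent \emph{why} the result holds---in a prime ring the hypothesis ``right $e$-reversible'' collapses to ``reversible''---while the paper's argument stays closer to the raw definition and avoids the intermediate reduction.
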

\begin{proof} One way is clear. For the other way, let $R$ be a right $e$-reversible prime ring and $a$, $b\in R$ with $ab = 0$.
For any $r\in R$, $abr = 0$. By hypothesis, $brae =  0$. Then
$bRae=0$. Since $R$ is prime, $ae = 0$ or $b = 0$.  If $b=0$, then
there is nothing to do. So $ae=0$.  Multiplying $ae=0$ from the
right by $re$ for any $r\in R$ yields $aere = 0$. Since $e$ is
left semicentral, $aere = 0$ implies $are = 0$. It follows that $a
= 0$ since $R$ is prime and $e\neq 0$.
\end{proof}

\begin{cor} Let $R$ be a right $e$-reversible prime ring.
Then $R$ is directly finite.
\end{cor}
There are directly finite rings $R$ with an idempotent $e$ such
that $R$ is neither right $e$-reversible nor a prime ring as the
following example shows.
\begin{ex}{\rm Let $F$ be a field of characteristic not equal to $2$ and consider the ring $R=U_2(F)$. It is well known that $R$ is directly finite but
not prime.
Let $E=\left[\begin{array}{cc}   0 & 0 \\   0 & 1 \\ \end{array}
\right]\in $ Id$(R)$. For $A=\left[\begin{array}{cc}   0 & 1 \\
0 & 1 \\ \end{array} \right]$ and $B=\left[\begin{array}{cc}   1 & 1 \\
0 & 0 \\ \end{array} \right]\in R$, $AB=0$ but $BAE\neq 0$. Hence
$R$ is not right $E$-reversible.   }
\end{ex}

\section{Extensions of $e$-reversible Rings}
Let $R$ be an algebra over a commutative ring $S$. Due to Dorroh
\cite{Do}, consider the abelian group $R\oplus S$ with
multiplication defined by $(a, b)(c, d) = (ac + da + bc, bd)$
where $a$, $c\in R$, $b$, $d\in S$. By this operation $R\oplus S$
becomes a ring called {\it Dorroh extension of $R$ by $S$} and
denoted by $D(R, S)$. By definition, $S$ is isomorphic to a
subring of $R$. By this reason we may assume that $S$ is contained
in the center of $R$ and use this fact in the sequel.
\begin{lem} Let $(a, b)\in D(R, S)$. Then $(a, b)\in$ Id$(D(R, S))$  if and only if $a + b\in$ Id$(R)$, $b\in$ Id$(S)$.
\end{lem}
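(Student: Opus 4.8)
The plan is a direct computation with the Dorroh multiplication. First I would square $(a, b)$: by the defining rule $(a, b)(c, d) = (ac + da + bc, bd)$, taking $(c, d) = (a, b)$ gives $(a, b)^2 = (a^2 + ba + ba, b^2) = (a^2 + 2ba, b^2)$. Since $S$ is contained in the center of $R$, we have $ba = ab$, so $(a, b)^2 = (a^2 + 2ab, b^2)$. Consequently $(a, b)\in$ Id$(D(R, S))$ if and only if the two equations $a^2 + 2ab = a$ (in $R$) and $b^2 = b$ (in $S$) hold simultaneously.

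For the sufficiency, assume $a + b\in$ Id$(R)$ and $b\in$ Id$(S)$. Expanding $(a + b)^2 = a^2 + ab + ba + b^2$ and using $ba = ab$ together with $b^2 = b$, the identity $(a + b)^2 = a + b$ becomes $a^2 + 2ab + b = a + b$, that is, $a^2 + 2ab = a$. Combined with $b^2 = b$, this is exactly the pair of conditions displayed above, so $(a, b)\in$ Id$(D(R, S))$.

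For the necessity, assume $(a, b)^2 = (a, b)$. Comparing second components yields $b^2 = b$, so $b\in$ Id$(S)$. Comparing first components yields $a^2 + 2ab = a$; adding $b = b^2$ to both sides and regrouping gives $a^2 + 2ab + b^2 = a + b$, i.e., $(a + b)^2 = a + b$, so $a + b\in$ Id$(R)$. This settles both implications.

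I expect no real obstacle here beyond careful bookkeeping; the only subtlety is the cross term $2ab$ produced by the twisted addition in the first coordinate, and the need to invoke $b\in C(R)$ so that $ba$ and $ab$ coincide — which is precisely what makes the first coordinate of $(a, b)^2$ agree with the square of $a + b$ computed in $R$.
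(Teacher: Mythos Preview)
Your proof is correct and follows essentially the same direct computation as the paper: both square $(a,b)$ to obtain $(a^2+2ba,b^2)$, compare coordinates, and pass between $a^2+2ba=a$ with $b^2=b$ and $(a+b)^2=a+b$ by adding or subtracting $b=b^2$. Your write-up is slightly more explicit about invoking centrality of $S$ to identify $ba$ with $ab$, but the argument is the same.
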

\begin{proof} Let $(a, b)\in$ Id$(D(R, S))$. Then $(a, b)^2 = (a^2 + 2ba, b^2) = (a, b)$ implies $b^2 = b$ and $a^2 + 2ba + b^2 = a + b$.
Conversely, assume that $a^2 + 2ba + b^2 = a + b$ and $b^2 = b$.
Then $a^2 + 2ba = a$. So $(a^2 + 2ba, b^2) = (a, b)$. Hence $(a,
b)^2 = (a, b)\in$ Id$(D(R, S))$.
\end{proof}
\begin{prop} Let $e\in$ Id$(R)$. Then $R$ is right $e$-reversible if and only if $D(R, S)$ is right $(e, 0)$-reversible.
\end{prop}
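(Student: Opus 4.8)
The plan is to use the two ring maps that the Dorroh extension naturally carries. First, the map $\iota\colon R\to D(R,S)$ given by $a\mapsto(a,0)$ is a ring monomorphism (although not unital), with $(a,0)(c,0)=(ac,0)$ and $\iota(e)=(e,0)$. Second, and more importantly, the map $\phi\colon D(R,S)\to R$ given by $(a,b)\mapsto a+b$ is a surjective unital ring homomorphism: it is clearly additive and unital, and multiplicativity amounts to $\phi\bigl((a,b)(c,d)\bigr)=ac+da+bc+bd=(a+b)(c+d)$, which holds because $S$ lies in the center of $R$, so $da=ad$. Before using these I would note that $(e,0)$ is a nonzero idempotent of $D(R,S)$: this is immediate from the preceding lemma since $e\in\mathrm{Id}(R)$ and $0\in\mathrm{Id}(S)$, and $e\neq 0$ gives $(e,0)\neq 0$.

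For the ``if'' direction I would run the argument through $\iota$. If $a,c\in R$ satisfy $ac=0$, then $\iota(a)\iota(c)=(ac,0)=0$, so right $(e,0)$-reversibility of $D(R,S)$ yields $\iota(c)\iota(a)(e,0)=0$; since $\iota(c)\iota(a)(e,0)=(ca,0)(e,0)=(cae,0)$, this says $cae=0$, i.e.\ $R$ is right $e$-reversible. For the ``only if'' direction, assume $R$ is right $e$-reversible and let $(a,b),(c,d)\in D(R,S)$ with $(a,b)(c,d)=0$. The second coordinate gives $bd=0$, and applying $\phi$ gives $(a+b)(c+d)=0$ in $R$, so by right $e$-reversibility of $R$ we get $(c+d)(a+b)e=0$. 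The goal is $(c,d)(a,b)(e,0)=0$; carrying out the twisted multiplication, $(c,d)(a,b)=(ca+bc+da,\,db)$ and hence $(c,d)(a,b)(e,0)=\bigl((ca+bc+da)e+dbe,\,0\bigr)$. Here $dbe=(bd)e=0$, and expanding $(c+d)(a+b)e=(ca+cb+da+db)e=(ca+bc+da)e$ (using that $b,d$ are central and $db=bd=0$) we see $(ca+bc+da)e=(c+d)(a+b)e=0$. Thus $(c,d)(a,b)(e,0)=0$ and $D(R,S)$ is right $(e,0)$-reversible.

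There is no real obstacle here, only the need to keep the twisted product straight together with the identification of $S$ with a central subring of $R$; the one nonobvious point is that right $e$-reversibility does not pass to arbitrary quotients (as the paper already illustrates), so the ``only if'' direction really does use the special shape of the kernel of $\phi$ via the extra relation $bd=0$. The decisive little identity is $(ca+bc+da)e=(c+d)(a+b)e$, which is exactly what matches the Dorroh computation to the hypothesis on $R$. As an alternative I would mention that one can instead go through Theorem~\ref{ere}: a direct check gives $(a,b)(e,0)=((a+b)e,0)$ and $(e,0)(a,b)(e,0)=(e(a+b)e,0)$, so $(e,0)$ is left semicentral in $D(R,S)$ iff $e$ is left semicentral in $R$, while $u\mapsto(u,0)$ identifies $eRe$ with the corner $(e,0)D(R,S)(e,0)$ as rings; Theorem~\ref{ere} applied to both rings then gives the equivalence. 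I would keep the direct computation as the proof and leave this reformulation as a remark.
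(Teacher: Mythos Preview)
Your proof is correct and follows essentially the same route as the paper's: both directions are handled by the inclusion $a\mapsto(a,0)$ and the augmentation $(a,b)\mapsto a+b$, with the key identity $(a+b)(c+d)=0$ (coming from $ac+da+bc=0$ and $bd=0$) feeding into right $e$-reversibility of $R$ to force $(c,d)(a,b)(e,0)=0$. Your write-up is a bit more explicit in the final step and in naming the two maps, and the alternative via Theorem~\ref{ere} is a pleasant remark, but there is no substantive difference from the paper's argument.
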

\begin{proof} For the necessity, assume that $R$ is right $e$-reversible.
Let $A = (a, b)$, $B = (c, d)\in D(R, S) $ with $AB = 0$. Then $ac
+ da + bc = 0$ and $bd = 0$. Hence $ac + da + bc + bd =  0$. Since
$S$ is contained in the center of $R$, $ac + da + bc + bd =  0$
implies $(a + b)(c + d) = 0$. By assumption, $(c + d)(a + b)e  =
0$. It follows that $(c, d)(a, b)(e, 0) = 0$. For the sufficiency,
suppose that $D(R, S)$ is right $(e, 0)$-reversible. Let $a$,
$b\in R$ with $ab = 0$. Then $(a, 0)(b, 0) = 0$. By supposition
$(b, 0)(a, 0)(e, 0) = 0$. It implies $(bae, 0) = 0$. Then $bae =
0$. Hence $R$ is right $e$-reversible.
\end{proof}

\cite[Example 2.1]{KiL} shows that a ring $R$ being reversible
need not imply $R[x]$ being reversible. Due to this fact, we can
say that the ring $R$ being right $e$-reversible need not imply
$R[x]$ being right $e$-reversible. We now deal with this property
for Armendariz rings. Recall that a ring $R$ is called {\it
Armendariz} if for any $f(x) = \sum^n_{i=0}a_ix^i$, $g(x) =
\sum^m_{j = 0}b_jx^j\in R[x]$, being $f(x)g(x) = 0$ implies
$a_ib_j = 0$ where $0\leq i\leq n$, $0\leq j\leq m$.
\begin{prop} Let $R$ be an Armendariz ring. Then the following are
equivalent.
\begin{enumerate}
    \item $R$ is a right  $e$-reversible ring for each $e\in$
    Id$(R)$.
    \item $R[x]$ is a right  $e$-reversible ring for each $e\in$
    Id$(R[x])$.
    \item $R[[x]]$ is a right  $e$-reversible ring for each $e\in$
    Id$(R[[x]])$.
\end{enumerate}
\end{prop}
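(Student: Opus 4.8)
The plan is to reduce all three conditions to ordinary reversibility of the three rings and then transfer reversibility coefficientwise using the Armendariz hypothesis. First I would observe that each of (1), (2), (3) is just the reversibility of the corresponding ring: since $1$ is a nonzero idempotent, each of the three conditions forces $R$, $R[x]$, $R[[x]]$ respectively to be right $1$-reversible, hence reversible; conversely, by Examples \ref{exs}(1) a reversible ring is $e$-reversible, in particular right $e$-reversible, for every nonzero idempotent, so reversibility of $R$ (resp.\ $R[x]$, $R[[x]]$) gives back (1) (resp.\ (2), (3)). Thus it suffices to show that, for $R$ Armendariz, the rings $R$, $R[x]$, $R[[x]]$ are reversible or not together.

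Two implications are immediate from $R\subseteq R[x]\subseteq R[[x]]$: if the larger ring is reversible, so is $R$, because $ab=0$ in $R$ gives $ab=0$, hence $ba=0$, in the larger ring, hence $ba=0$ in $R$. For $R$ reversible $\Rightarrow R[x]$ reversible I would take $f(x)=\sum_i a_ix^i$, $g(x)=\sum_j b_jx^j$ in $R[x]$ with $f(x)g(x)=0$: the Armendariz property gives $a_ib_j=0$ for all $i,j$, reversibility of $R$ gives $b_ja_i=0$ for all $i,j$, and collecting like powers of $x$ gives $g(x)f(x)=\sum_n\big(\sum_{i+j=n}b_ja_i\big)x^n=0$.

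The remaining implication, $R$ reversible $\Rightarrow R[[x]]$ reversible, is where the difficulty lies. The same coefficientwise computation works once one can pass from $f(x)g(x)=0$ for two \emph{power series} $f,g$ to $a_ib_j=0$ for all $i,j$; but that is the power-serieswise Armendariz property, which is genuinely stronger than being Armendariz, so this step is not covered by the stated hypothesis alone. The hard part will be exactly this passage, and I would handle it either by carrying out the last implication under (or after invoking) the power-serieswise Armendariz condition, citing the standard transfer theorem, or by staying with the $e$-reversibility formulation and routing through Theorem \ref{ere}: an Armendariz ring is abelian, so every idempotent of $R[[x]]$ is constant and we may write it as $e\in\mathrm{Id}(R)$; such $e$ is left semicentral in $R$ by Theorem \ref{ere} and hence left semicentral in $R[[x]]$ (check coefficients); and $eR[[x]]e\cong(eRe)[[x]]$, where $eRe$ is a reversible subring of the Armendariz ring $R$, hence itself Armendariz, so Theorem \ref{ere} again reduces the problem to reversibility of the power series ring over a reversible Armendariz ring. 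Every way of organizing the argument comes down to this one obstacle; the rest is bookkeeping with like powers of $x$.
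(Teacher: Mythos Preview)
Your reduction of each condition to ordinary reversibility of the corresponding ring is correct and is a genuinely different organization from the paper's. The paper never makes this reduction; instead it first records (citing Kim--Lee) that for an Armendariz ring $R$ every idempotent of $R[x]$ and of $R[[x]]$ already lies in $R$, and then, for $(1)\Rightarrow(2)$, carries out the same coefficientwise computation you do but with the idempotent $e$ kept throughout: from $f(x)g(x)=0$ it obtains $a_ib_j=0$, hence $b_ja_ie=0$, hence $g(x)f(x)e=0$. Your route is shorter because once you observe that ``right $e$-reversible for every nonzero idempotent $e$'' is the same as ``reversible'' (take $e=1$ for one direction, Examples~\ref{exs}(1) for the other), the idempotent bookkeeping and the Kim--Lee fact about constant idempotents become unnecessary.

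On $(1)\Leftrightarrow(3)$ you have correctly located the real obstacle. The Armendariz hypothesis speaks only of polynomials, and Armendariz does not in general imply power-serieswise Armendariz, so one cannot pass from $f(x)g(x)=0$ in $R[[x]]$ to $a_ib_j=0$ on the stated hypothesis alone. The paper disposes of this case with the single line ``by a similar discussion in $(1)\Leftrightarrow(2)$,'' which is precisely the step you flag as unjustified; your proposed detour through Theorem~\ref{ere} and corner rings, as you yourself note, ultimately runs into the same wall, since it still ends with the reversibility of a power-series ring over a reversible Armendariz ring. Your diagnosis is therefore accurate: $(1)\Leftrightarrow(2)$ is fine under either approach, while $(1)\Rightarrow(3)$ requires the power-serieswise Armendariz condition (or some additional input) that neither you nor the paper actually supplies.
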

\begin{proof} Note first that by \cite{KimL}, for any Armendariz ring $R$,
the idempotents in $R[x]$ and $R[[x]]$ belong to $R$ and $R$ is an abelian ring itself.\\
(1) $\Rightarrow$ (2) Assume that $R$ is a right $e$-reversible
ring for $e\in$ Id$(R)$. Let $f(x) = \sum^n_{i=0}a_ix^i$, $g(x) =
\sum^m_{j = 0}b_jx^j\in R[x]$ with $f(x)g(x) = 0$. By hypothesis,
$a_ib_j = 0$. The ring $R$ being right $e$-reversible yields
$b_ja_ie = 0$ for $0\leq i\leq n$, $0\leq j\leq m$.
This implies that $g(x)f(x)e = 0$. So $R[x]$ is right  $e$-reversible.\\
(2) $\Rightarrow$ (1) Clear. (1) $\Leftrightarrow$ (3) By a similar discussion in (1) $\Leftrightarrow$ (2).
\end{proof}

Let $R$ be a reduced ring. Then $D_2(R)$ is $e$-reversible for
each idempotent $e$. In fact, $D_2(R)$ and $V_n(R)$ are reversible
for each positive integer $n$, therefore they are $e$-reversible
for every idempotent $e$. The following example shows that the
assumption $R$ being reduced in proving $D_2(R)$ being reversible
is not superfluous.

\begin{ex} {\rm Let $R$ be a reduced ring and consider the ring $D_2(U_2(R))$.
Let \linebreak $A=\left[\begin{array}{cc}   \left[
\begin{array}{cc} 0 & 1\\   0 & 0 \\ \end{array} \right] & \left[
\begin{array}{rr} -1
& 1 \\   0 & -1 \\ \end{array} \right] \\
\left[ \begin{array}{cc}   0 & 0 \\   0 & 0 \\\end{array} \right]&
\left[\begin{array}{cc}  0 & 1 \\  0 & 0 \\ \end{array}\right]
\\ \end{array} \right]$ and $B=\left[\begin{array}{cc}   \left[ \begin{array}{cc}   0 &
1\\   0 & 0 \\ \end{array} \right] & \left[ \begin{array}{rr}   -1
& 1 \\   0 & 1 \\ \end{array} \right] \\
\left[ \begin{array}{cc}   0 & 0 \\   0 & 0 \\\end{array} \right]&
\left[\begin{array}{cc}  0 & 1 \\  0 & 0 \\ \end{array}\right]
\\ \end{array} \right]$ and consider the idempotent $E=\left[\begin{array}{cc}   \left[ \begin{array}{cc}   0 &
0\\   0 & 1 \\ \end{array} \right] & \left[ \begin{array}{rr}  0
& 0 \\   0 & 0 \\ \end{array} \right] \\
\left[ \begin{array}{cc}   0 & 0 \\   0 & 0 \\\end{array} \right]&
\left[\begin{array}{cc}  0 & 0 \\  0 & 1 \\ \end{array}\right]
\\ \end{array} \right]\in D_2(U_2(R))$. Then $AB=0$ but $BAE\neq 0$. Hence
$D_2(U_2(R))$ is not right $E$-reversible.}
\end{ex}

For a reduced ring $R$, $D_3(R)$ need not be reversible as noted
in \cite[Example 1. 5]{KiL}. We note also the following.

In case $n\geq 3$, the ring $R$ being reduced and $e\in$ Id$(R)$
need not imply $D_n(R)$ being $eI_n$-reversible as illustrated
below.

\begin{ex}{\rm Let $R$ be a reduced ring and $e\in$ Id$(R)$ with $E = eI_3$. Consider
$A = \begin{bmatrix}0&0&0\\0&0&1\\0&0&0\end{bmatrix}$ and $B =
\begin{bmatrix}0&1&0\\0&0&1\\0&0&0\end{bmatrix}\in D_3(R)$. Then
$AB = 0$ and $BAE\neq 0$.}
\end{ex}

However, there are reversible rings and idempotents $E$ in
$D_3(R)$ such that $D_3(R)$ is right $E$-reversible.

\begin{thm}\label{matrix} Let $R$ be a reduced ring and $n$ any positive integer such that $n\geq 3$. Then $D_n(R)$ is right $E_{22}$-reversible ring.
\end{thm}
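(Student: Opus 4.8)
The plan is to check the defining condition of right $E_{22}$-reversibility directly, exploiting the rigidity of $D_n(R)$. Each element of $D_n(R)$ is uniquely of the form $aI_n+N$ with $a\in R$ and $N$ strictly upper triangular, so I would take arbitrary $A=aI_n+A'$ and $B=bI_n+B'$ in $D_n(R)$ with $AB=0$ and write $a_{12}$, $b_{12}$ for the $(1,2)$-entries of $A$, $B$ (equivalently of $A'$, $B'$).

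The first step is to extract the scalar relations encoded in $AB=0$. From $AB=abI_n+aB'+A'b+A'B'$, in which the last three summands are strictly upper triangular, the diagonal gives $ab=0$; since a reduced ring is reversible, also $ba=0$. Comparing $(1,2)$-entries gives $ab_{12}+a_{12}b=0$: here $A'B'$ contributes nothing in position $(1,2)$ because a path $1\to k\to 2$ would simultaneously require $k\ge 2$ (so that $A'_{1k}\ne 0$) and $k\le 1$ (so that $B'_{k2}\ne 0$).

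The second step is to reduce $BAE_{22}=0$ to a statement about entries. Since $BA\in D_n(R)$ is upper triangular, $BAE_{22}$ is zero outside its second column, where it equals the second column of $BA$; thus $BAE_{22}=(BA)_{12}E_{12}+(BA)_{22}E_{22}$. The diagonal term $(BA)_{22}$ is the common diagonal value $ba=0$, and, by the same ``no path $1\to k\to 2$'' observation, $(BA)_{12}=ba_{12}+b_{12}a$. So everything comes down to proving $ba_{12}=0$ and $b_{12}a=0$.

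The final step uses only the relations above together with the hypothesis that $R$ is reduced. Left-multiplying $ab_{12}+a_{12}b=0$ by $b$ and using $ba=0$ yields $ba_{12}b=0$; right-multiplying by $a_{12}$ gives $(ba_{12})^2=0$, so $ba_{12}=0$ since $R$ has no nonzero nilpotents. Symmetrically, right-multiplying $ab_{12}+a_{12}b=0$ by $a$ and using $ba=0$ yields $ab_{12}a=0$; left-multiplying by $b_{12}$ gives $(b_{12}a)^2=0$, so $b_{12}a=0$. Hence $(BA)_{12}=0$ and $BAE_{22}=0$, which is right $E_{22}$-reversibility. I do not expect a genuine obstacle; the only care needed is to keep left and right multiplications apart (a reduced ring need not be commutative) and to notice that the $(1,2)$-entry of products of strictly upper triangular matrices is automatically zero, which is exactly what confines the whole argument to the single identity $ab_{12}+a_{12}b=0$. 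The computation is uniform in $n$ and works for every $n\ge 3$ (indeed for every $n\ge 2$).
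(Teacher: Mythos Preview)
Your proof is correct and follows essentially the same approach as the paper: an entry-by-entry analysis of $BA$ using the relations forced by $AB=0$ together with the reduced hypothesis on $R$. The paper treats only the case $n=3$ and simply asserts (without spelling out the squaring argument you give) the stronger fact that $BA$ has every entry zero except possibly the $(1,3)$-entry, whereas your version isolates precisely the second column of $BA$ that $E_{22}$ sees and works uniformly for all $n\ge 2$.
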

\begin{proof} Consider $n=3$. Let $A = \begin{bmatrix}a&b&c\\0&a&d\\0&0&a\end{bmatrix}$, $B = \begin{bmatrix}x&y&z\\0&x&t\\0&0&x\end{bmatrix}\in D_3(R)$
with $AB = 0$. Being $R$ reduced gives rise to $BA =
\begin{bmatrix}0&0&*\\0&0&0\\0&0&0\end{bmatrix}$. Hence $BAE = 0$.
\end{proof}

\begin{thm}\label{matrix1} Let $R$ be a reduced ring,
$e\in$ Id$(R)$ and any positive integer $n\geq 3$. Then $V_n(R)$
is right $eI_n$-reversible.
\end{thm}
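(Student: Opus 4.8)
The plan is to identify $V_n(R)$ with the ring of upper triangular Toeplitz matrices: writing $N=\sum_{i=1}^{n-1}E_{i,i+1}$ for the nilpotent shift matrix (so $N^n=0$ and $N$ commutes with every scalar matrix $rI_n$), the defining relations of $V_n(R)$ say exactly that each of its elements has the form $a_0I_n+a_1N+\cdots+a_{n-1}N^{n-1}$ with the $a_i\in R$. Hence $V_n(R)\cong R[x]/(x^n)$ with $x$ a central indeterminate, and I shall write elements of $V_n(R)$ as polynomials $\sum_{i<n}a_ix^i$. Since $R$ is reduced it is abelian, so $e$ is central in $R$; consequently $E:=eI_n$ is central in $V_n(R)$ and in particular left semicentral, which is the ``$e$ left semicentral'' half of the required conclusion.

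Next, take $A=\sum_{i=0}^{n-1}a_ix^i$ and $B=\sum_{j=0}^{n-1}b_jx^j$ in $V_n(R)$ with $AB=0$. Because $x$ is central, $AB=\sum_{k=0}^{n-1}\big(\sum_{i+j=k}a_ib_j\big)x^k$, and since $1,x,\dots,x^{n-1}$ are left $R$-independent, $AB=0$ amounts to the $n$ coefficient relations $\sum_{i+j=k}a_ib_j=0$, $k=0,1,\dots,n-1$. The crux of the proof is the Armendariz-type claim that, $R$ being reduced, these relations force $a_ib_j=0$ whenever $i+j\le n-1$. I would establish this by induction on $k$: assuming $a_ib_j=0$ for all $i+j\le k-1$, write the degree-$k$ relation as $a_0b_k+a_1b_{k-1}+\cdots+a_kb_0=0$ (every index here lies in $\{0,\dots,n-1\}$ since $k\le n-1$), and multiply it on the left by $a_0$; each summand $a_0a_mb_{k-m}$ with $m\ge1$ vanishes because $R$ is semicommutative and $a_0b_{k-m}=0$ by the induction hypothesis, so $a_0^2b_k=0$ and hence $a_0b_k=0$ as $R$ is reduced. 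Deleting this term and multiplying the remaining relation on the left by $a_1$ kills every summand $a_1a_mb_{k-m}$ with $m\ge2$ (now $a_1b_{k-m}=0$ because $1+(k-m)\le k-1$), giving $a_1^2b_{k-1}=0$ and so $a_1b_{k-1}=0$; iterating this peeling-off argument yields $a_lb_{k-l}=0$ for all $l$, completing the induction.

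It remains to assemble the pieces. Reduced rings are reversible, so $a_ib_j=0$ gives $b_ja_i=0$, hence $b_ja_ie=0$, for all $i,j$ with $i+j\le n-1$. Writing $BA=\sum_{k=0}^{n-1}c_kx^k$ with $c_k=\sum_{i+j=k}b_ia_j$, every $c_ke=0$, and since $x$ commutes with $e$ we get
\[
(BA)E=\Big(\sum_{k=0}^{n-1}c_kx^k\Big)e=\sum_{k=0}^{n-1}c_kex^k=0,
\]
so $V_n(R)$ is right $E$-reversible. The one place needing care is the nested induction of the second paragraph; in particular one should note that the truncation at degree $n-1$ never obstructs it, since for $i+j\le n-1$ all indices occurring stay within $\{0,\dots,n-1\}$. (The same computation in fact produces $BA=0$, so alternatively the theorem follows from the reversibility of $V_n(R)$ together with Examples~\ref{exs}(1).)
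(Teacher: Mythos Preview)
Your proof is correct and follows the same route as the paper: the paper simply cites \cite[Theorem~2.5]{KiL}, which asserts that $V_n(R)$ is reversible whenever $R$ is reduced, and then the claim follows from Examples~\ref{exs}(1); your Armendariz-type peeling argument is precisely a self-contained proof of that cited result (as you yourself note in your final parenthetical). The only cosmetic point is that the step ``$a_0^2b_k=0$ hence $a_0b_k=0$'' deserves one extra word of justification---in a reduced ring $x^2y=0$ gives $yx^2=0$, hence $xyx=0$, hence $(xy)^2=0$, hence $xy=0$---but this is standard and does not affect correctness.
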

\begin{proof} Clear by \cite[Theorem 2.5]{KiL}.
\end{proof}

Let $R$ be a ring and $S$ a subring of $R$ with the same identity as that of $R$ and
$$T[R, S] = \{ (r_1, r_2, r_3, \dots , r_n, s, s, s, \dots) : r_i\in R, s\in S; i,n\in \Bbb Z, 1\leq i\leq n\}.$$ Then $T[R, S]$ is
a ring under the componentwise addition and multiplication.

\begin{prop}\label{TRS} Let $R$ be a ring and $S$ a subring of $R$ with the same identity as that of $R$.
Let $e\in$ Id$(S)$ and $E = (e, e, e, \dots)\in$ Id$(T[R, S])$.
Then $R$ is right $e$-reversible if and only if $T[R, S]$ is right
$E = (e, e, e, \dots)$-reversible.
\end{prop}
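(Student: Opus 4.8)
The plan is to prove both implications directly from the definition of right $e$-reversibility, exploiting the fact that addition and multiplication in $T[R,S]$ are performed componentwise and that a product of elements of $S$ agrees with the same product computed in $R$ (since $S$ is a subring of $R$ with the same identity). The idempotent hypothesis $e\neq 0$ guarantees $E=(e,e,e,\dots)\neq 0$, so that ``right $E$-reversible'' makes sense.

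For the sufficiency, I would assume $T[R,S]$ is right $E$-reversible and take $a,b\in R$ with $ab=0$. Setting $A=(a,0,0,\dots)$ and $B=(b,0,0,\dots)$ (both with constant tail $0\in S$, hence legitimate elements of $T[R,S]$), we get $AB=(ab,0,0,\dots)=0$, so $BAE=(bae,0,0,\dots)=0$ by hypothesis, forcing $bae=0$. Thus $R$ is right $e$-reversible. For the necessity, assume $R$ is right $e$-reversible and take $A=(a_1,\dots,a_N,s,s,\dots)$, $B=(b_1,\dots,b_N,t,t,\dots)$ in $T[R,S]$ with $AB=0$, where I first enlarge $N$ if necessary so that the two sequences stabilize from the same index. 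Comparing components gives $a_ib_i=0$ for $1\le i\le N$ and $st=0$ (an equality in $S$, hence in $R$). Applying right $e$-reversibility of $R$ to each of these yields $b_ia_ie=0$ for $1\le i\le N$ and $tse=0$; since $t,s,e\in S$ the tail entry $tse$ again lies in $S$, so $BAE=(b_1a_1e,\dots,b_Na_Ne,tse,tse,\dots)$ is a well-defined element of $T[R,S]$ and equals $0$. Hence $T[R,S]$ is right $E$-reversible.

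I expect no genuine obstacle: the proof is essentially bookkeeping. The only points deserving a moment's care are (i) rewriting $A$ and $B$ with a common stabilization index before comparing entries, and (ii) confirming that $BAE$ (in particular its tail $tse$) remains inside $T[R,S]$, which is immediate from closure of $S$ under multiplication. One could alternatively route the argument through Theorem \ref{ere}, checking that $E$ is left semicentral in $T[R,S]$ exactly when $e$ is left semicentral in $R$ and that $E\,T[R,S]\,E$ is reversible exactly when $eRe$ is, but the direct componentwise verification above is shorter and more transparent.
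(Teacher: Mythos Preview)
Your proof is correct and follows essentially the same componentwise approach as the paper. The only cosmetic difference is that you first pad $A$ and $B$ to share a common stabilization index $N$, whereas the paper keeps distinct lengths $n\le m$ and treats the intermediate coordinates $n+1\le i\le m$ separately; your version is slightly cleaner but the underlying argument is identical.
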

\begin{proof} For the necessity, assume that $R$ is a right $e$-reversible ring.
Let $A$, $B\in T[R, S]$ with $A = (a_1, a_2, a_3, \dots, a_n, s,
s, s, \dots)$, $B = (b_1, b_2, b_3, \dots, b_m, t, t, t, \dots)\in
T[R, S]$ and $AB = 0$. We may assume that $n\leq m$. Then $a_ib_i
= 0$ where $1\leq i\leq n$, so $b_ia_ie = 0$. If $n + 1\leq i$,
then $sb_i = 0$ and $st = 0$. Hence $b_ise = 0$ and $tse = 0$. It
follows that $BAE = 0$. Similarly, if  $m>n$, then we obtain
$BAE=0$. So $T[R, S]$ is right $E = (e, e, e, \dots)$-reversible.
For the sufficiency, suppose that $T[R, S]$ is right $E = (e, e,
e, \dots)$-reversible. Let $a$, $b\in R$ with $ab = 0$. Set $A =
(a, 0, 0, 0, \dots)$, $B = (b, 0, 0, 0, \dots)\in T[R, S]$. Then
$AB = 0$. By supposition, $BAE = 0$. Hence $bae = 0$. That is, $R$
is right $e$-reversible.
\end{proof}

As an illustration of Proposition \ref{TRS}, we give the following
example.
\begin{ex}{\rm Let $R = M_2(\Bbb Z)$ and $S = U_2(\Bbb Z)$,
$x = \begin{bmatrix}1&0\\1&0\end{bmatrix}$, $y =
\begin{bmatrix}0&0\\1&1\end{bmatrix}\in M_2(\Bbb Z)$ and $e =
\begin{bmatrix}1&1\\0&0\end{bmatrix}$ and $s$, $t\in S$ arbitrary.
Then $xy = 0$. Consider $A = (x, x, x, 0, 0, 0, \dots)$, $B = (y,
y, y, 0, 0, 0, \dots)$ and $E = (e, e, e, e, \dots )$. However,
$xy = 0$ but $yxe\neq 0$. Hence $AB = 0$ but $BAE\neq 0$. By
Example \ref{ikie}, although, $U_2(\Bbb Z)$ is right
$e$-reversible, since $M_2(\Bbb Z)$ is not right $e$-reversible,
$T[M_2(\Bbb Z), U_2(\Bbb Z)]$ is not right $E$-reversible.}
\end{ex}

By a similar discussion in the proof of Proposition \ref{TRS}, we
attain the next result.

\begin{prop} Let $R$ be a ring and $S$ a subring of $R$ with the same identity as that of
$R$. Then the following hold.
\begin{enumerate}
    \item Let $e\in$ Id$(R)$. Then $R$ is right $e$-reversible if and only if $T[R, S]$ is right
$(\underbrace{e, e, \dots, e}_{n~\text{times}},
0,0,\dots)$-reversible for every integer $n\geq 1$.
    \item Let $e_0\in$ Id$(S)$ and $e_1, e_2,\dots,e_n\in$ Id$(R)$. Then $R$
is right $e_i$-reversible for every $i=0,1,\dots,n$ if and only if
$T[R, S]$ is right $(e_1, e_2, \dots,
e_n,e_0,e_0,\dots)$-reversible.
\end{enumerate}
\end{prop}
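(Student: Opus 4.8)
The plan is to imitate the proof of Proposition \ref{TRS} almost verbatim, exploiting the fact that $T[R,S]$ is built on componentwise operations and that each of the idempotents in question is an eventually-constant sequence whose entries are idempotents in $R$ (or in $S$ in the tail). For part (1), set $E=(\underbrace{e,e,\dots,e}_{n},0,0,\dots)$. For necessity, suppose $R$ is right $e$-reversible and take $A=(a_1,\dots,a_k,s,s,\dots)$, $B=(b_1,\dots,b_\ell,t,t,\dots)$ in $T[R,S]$ with $AB=0$; after padding so that both stabilize past the same index, componentwise multiplication gives $a_ib_i=0$ for every coordinate $i$, so right $e$-reversibility of $R$ gives $b_ia_ie=0$ in each coordinate that is among the first $n$, while in the coordinates beyond the $n$-th the corresponding entry of $E$ is $0$, so that coordinate of $BAE$ vanishes trivially. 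Hence $BAE=0$. For sufficiency, feed in $A=(a,0,0,\dots)$, $B=(b,0,0,\dots)$ with $ab=0$; since $n\geq 1$, the first coordinate of $E$ is $e$, so $BAE=0$ forces $bae=0$. That $E\in\mathrm{Id}(T[R,S])$ is immediate since $e^2=e$ and $0^2=0$ coordinatewise.

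For part (2) the argument is identical in structure; the only extra bookkeeping is that the idempotent is now $E=(e_1,e_2,\dots,e_n,e_0,e_0,\dots)$ with possibly distinct idempotents in the first $n$ slots and $e_0\in\mathrm{Id}(S)$ in the tail — this is a legitimate element of $T[R,S]$ and is idempotent because each entry is. For the "only if" direction, assume $R$ is right $e_i$-reversible for every $i=0,1,\dots,n$; given $AB=0$ in $T[R,S]$ with $A,B$ stabilizing past some index $N\geq n$, we get $a_jb_j=0$ in coordinate $j$, hence $b_ja_j\,(\text{entry of }E\text{ at }j)=0$ by right $e_j$-reversibility of $R$ when $1\leq j\leq n$ and by right $e_0$-reversibility of $R$ when $j>n$ (here the tail entries of $A,B$ lie in $S\subseteq R$, and $e_0\in\mathrm{Id}(S)\subseteq\mathrm{Id}(R)$, so right $e_0$-reversibility of $R$ applies to them as elements of $R$). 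Thus $BAE=0$. For the converse, to recover right $e_i$-reversibility for a fixed $i\in\{1,\dots,n\}$ take $A,B$ supported only in coordinate $i$ with that entry $ab=0$; the $i$-th entry of $E$ is $e_i$, so $BAE=0$ yields $b a e_i=0$. To recover right $e_0$-reversibility, put $A=(0,\dots,0,s,s,s,\dots)$, $B=(0,\dots,0,t,t,t,\dots)$ constant in the tail with $st=0$ in $S$ (hence in $R$); then the tail entries of $BAE$ are $tse_0$, forcing $tse_0=0$, i.e.\ right $e_0$-reversibility of $R$ restricted to $S$. Since $e_0$ is left semicentral in $S$ by Theorem \ref{ere} and, more to the point, we only need the defining implication "$st=0\Rightarrow tse_0=0$" for elements of $R$, and $s,t$ range over all of $S$ — one should note that if we want the full statement "$R$ is right $e_0$-reversible" we must test all $a,b\in R$, which is handled exactly as in the $i\neq 0$ case using coordinate-$1$ supports but with $e_0$ replaced appropriately; cleanest is to observe $e_0\in\mathrm{Id}(S)\subseteq\mathrm{Id}(R)$ and run the coordinate-supported test as before with the understanding that any single fixed coordinate among the first $n$ can be overwritten — or simply invoke Proposition \ref{TRS} directly for $e_0$ after noting $T[R,S]$ right $(e_1,\dots,e_n,e_0,e_0,\dots)$-reversible implies, by the same coordinate-$(n{+}1)$ test, the conclusion.

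The only genuine subtlety — and the step I would be most careful about — is the sufficiency direction for $e_0$ in part (2): one must produce elements of $T[R,S]$ that isolate the behaviour of $e_0$, and the natural choice is elements that are $0$ on the first $n$ coordinates and constant on the tail, so that $AB=0$ amounts to $st=0$ in $S$ and $BAE=0$ reads off as $tse_0=0$; combined with Theorem \ref{ere} (giving left semicentrality of $e_0$ in $S$, hence that $S$ is right $e_0$-reversible), and then the observation that testing right $e_0$-reversibility of $R$ itself only requires coordinate-$1$-supported elements (whose first coordinate can be any $a,b\in R$ while $E$ still has $e_1$ there) — wait, that gives $e_1$, not $e_0$; so the correct resolution is that part (2) as stated is recovering right $e_0$-reversibility of $R$, and to do this one uses that the tail coordinates of $T[R,S]$ form a copy of $S$, so one first gets $S$ right $e_0$-reversible, but the claim is about $R$. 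The resolution: since $S\subseteq R$ shares the identity and $e_0\in S$, one instead takes $A,B$ supported on a single tail coordinate — but tail coordinates are constrained to be eventually constant, so effectively one takes $A=(0,\dots,0,a,a,a,\dots)$ with $a\in S$ only. To get all of $R$, the honest statement requires $e_0$ also to work over $R$; this is automatic here because the hypothesis "$R$ is right $e_i$-reversible for every $i$" already includes $i=0$, so the converse only needs to reproduce exactly that, and the tail-coordinate test with $a,b\in S$ is insufficient. The clean fix, which I would adopt, is to note that part (2)'s converse for $e_0$ follows from part (1) applied with $n+1$ in place of $n$ together with a relabeling, or simply from Proposition \ref{TRS} — so in the write-up I would say "the equivalence for $e_0$ is exactly Proposition \ref{TRS} after deleting the first $n$ coordinates, and for each $e_i$ with $1\le i\le n$ the argument is the coordinate-$i$ specialization of the proof of Proposition \ref{TRS}," and leave the componentwise verification to the reader as the paper already does.
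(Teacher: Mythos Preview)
Your approach matches the paper's, which simply says the result follows ``by a similar discussion in the proof of Proposition \ref{TRS}.'' The necessity direction and the $e_i$ cases ($1\le i\le n$) of the sufficiency direction are handled exactly as you describe.

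However, you tie yourself in knots over the sufficiency direction for $e_0$ in part (2), and the difficulty is self-inflicted. You write that ``tail coordinates are constrained to be eventually constant, so effectively one takes $A=(0,\dots,0,a,a,a,\dots)$ with $a\in S$ only,'' but this is a misreading of the definition of $T[R,S]$: the stabilization index of an element is \emph{not} tied to the $n$ appearing in the idempotent $E$. Given $a,b\in R$ with $ab=0$, the sequences $A=(0,\dots,0,a,0,0,\dots)$ and $B=(0,\dots,0,b,0,0,\dots)$ with $a,b$ placed in coordinate $n+1$ and all other entries zero are perfectly good elements of $T[R,S]$ (their eventual constant is $0\in S$). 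Then $AB=0$, the $(n{+}1)$-st entry of $E$ is $e_0$, and $BAE=0$ reads off as $bae_0=0$ in that coordinate. This is the direct analogue of the coordinate-$i$ test you already used for $e_1,\dots,e_n$, and it avoids the detour through shifts, projections, or Theorem \ref{ere} that you propose at the end.
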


Let $R$ be a ring and $S$ be a multiplicatively closed subset of
$R$ consisting of central regular elements. Let $S^{-1}R =
\left\{a/s\mid a\in R, s\in S\right\}$ denote the localization of
$R$ at $S$. Note that for any $a/s$, $b/t\in S^{-1}R$, $(a/s)(b/t)
= 0$ if and only if $ab = 0$. Let $a/s\in S^{-1}R$. Then $a/s\in$
Id$(S^{-1}R)$ if and only if  $a^2=as$. We state and prove the
following.
\begin{prop} Let $R$ be a ring and $S$ be a multiplicatively closed subset of $R$ consisting of central regular elements with $e\in$ Id$(R)$. Then $R$ is right $e$-reversible if and only if $S^{-1}R$ is right $(e/1)$-reversible.
\end{prop}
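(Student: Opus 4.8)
The plan is to transfer the defining condition of right $e$-reversibility across the natural ring map $R\to S^{-1}R$, $a\mapsto a/1$, which is injective because the elements of $S$ are regular. First I would record that $e/1\in$ Id$(S^{-1}R)$: this is immediate from $e^2=e$, hence $e^2=e\cdot 1$, which is exactly the stated idempotency criterion $a^2=as$ with $s=1$. So the right $(e/1)$-reversibility of $S^{-1}R$ is meaningful. The computational facts I would lean on are the two observations noted just before the proposition, namely that for $a/s,b/t\in S^{-1}R$ one has $(a/s)(b/t)=(ab)/(st)$ (using that $S$ is central) and $(a/s)(b/t)=0$ if and only if $ab=0$; more generally, if $c/s=0$ in $S^{-1}R$ then $uc=0$ for some $u\in S$, and regularity of $u$ forces $c=0$.

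For the forward implication, suppose $R$ is right $e$-reversible and take $a/s,\,b/t\in S^{-1}R$ with $(a/s)(b/t)=0$. Then $ab=0$, so by hypothesis $bae=0$, and therefore $(b/t)(a/s)(e/1)=(bae)/(ts)=0/(ts)=0$. Hence $S^{-1}R$ is right $(e/1)$-reversible.

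For the converse, suppose $S^{-1}R$ is right $(e/1)$-reversible and take $a,b\in R$ with $ab=0$. Then $(a/1)(b/1)=(ab)/1=0$, so by hypothesis $(b/1)(a/1)(e/1)=(bae)/1=0$ in $S^{-1}R$. This means $u\,bae=0$ for some $u\in S$, and since $u$ is regular we conclude $bae=0$. Thus $R$ is right $e$-reversible.

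I do not expect a genuine obstacle here; the only points requiring a little care are (i) checking that $e/1$ is an idempotent so that the right $(e/1)$-reversibility is even defined, and (ii) using the regularity (not merely the centrality) of the elements of $S$ when pulling an equality in $S^{-1}R$ back down to $R$. Everything else is a direct unwinding of the definition together with the two preliminary facts on the localization.
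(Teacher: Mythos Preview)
Your proof is correct and follows essentially the same route as the paper's: both directions use the observation that $(a/s)(b/t)=0$ iff $ab=0$ to reduce the localization condition to the one in $R$. Your write-up is slightly more detailed (checking $e/1\in\text{Id}(S^{-1}R)$ and invoking regularity to pull $(bae)/1=0$ back to $bae=0$), but the argument is the same.
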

\begin{proof} Assume that $R$ is right $e$-reversible and let $a/s$, $b/t\in S^{-1}R$ with $(a/s)(b/t) = 0$. Then $ab = 0$. Hence $bae = 0$. It follows that $(b/t)(a/s)(e/1) = 0$. Conversely, suppose that $S^{-1}R$ is right $(e/1)$-reversible. Let $a$, $b\in R$ with $ab = 0$. Then $(a/1)(b/1) = 0$. So $(b/1)(a/1)(e/1) = 0$. Hence $bae = 0$.
\end{proof}

\section{Some e-reversible subrings of matrix rings}
In preceding sections we show that full matrix rings $M_n(R)$ and upper triangular matrix rings $U_n(R)$ need not be right(left) $e$-reversible for some idempotent $e$ and for some ring $R$. In this section we investigate the conditions under which right(left) $e$-reversibility properties holds in some subrings of $M_n(R)$.\\
%{\bf The rings $L_{(s,t)}(R)$:} Let $R$ be a ring and $s$, $t\in
%C(R)$ invertible in $R$. Let $L_{(s,t)}(R) = \left
%\{\begin{bmatrix}a&0&0\\sc&d&te\\0&0&f\end{bmatrix}\in M_3(R)\mid
%a, c, d, e, f\in R\right \}$, where the operations are defined as
%those in $M_3(R)$. Then $L_{(s,t)}(R)$ is a subring of $M_3(R)$.
%\begin{lem}\label{inv1} Let $A = \begin{bmatrix}a&0&0\\sc&d&te\\0&0&f\end{bmatrix}\in L_{(s,t)}(R)$.
%\end{lem}

{\bf The rings $H_{(s,t)}(R)$:} Let $R$ be a ring  and  $s$, $t\in
C(R)$ be invertible in $R$. Let\begin{center} $H_{(s,t)}(R) =
\left\{\begin{bmatrix}a&0&0\\c&d&f\\0&0&g
\end{bmatrix}\in M_3(R)\mid a, c, d, f, g\in R, a - d = sc, d - g = tf\right \}$.\end{center}
Then $H_{(s,t)}(R)$ is a subring of $M_3(R)$.

In the following we state and prove the conditions under which
$H_{(s,t)}(R)$ is $e$-reversible. Note that $R$ is a commutative ring if and only if $H_{(s,t)}(R)$ is a commutative ring.
\begin{lem}\label{hu} Let $R$ be a ring with $e\in$ Id$(R)$ and $E = eI_3\in H_{(s,t)}(R)$.
Then $R$ is  a right $e$-reversible ring if and only if
$H_{(s,t)}(R)$ is a right $E$-reversible ring.
\end{lem}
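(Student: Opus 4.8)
The plan is to mimic the structure of the proof of Proposition~\ref{TRS} (and of Lemma~\ref{hu}'s matrix-ring predecessors): reduce the statement about $H_{(s,t)}(R)$ to a coordinatewise statement about $R$, using the fact that $E=eI_3$ interacts with the entries of a matrix in $H_{(s,t)}(R)$ in the simplest possible way, namely by multiplying each entry by $e$. First I would record the multiplication rule: for $M=\begin{bmatrix}a&0&0\\c&d&f\\0&0&g\end{bmatrix}$ and $N=\begin{bmatrix}x&0&0\\y&w&h\\0&0&k\end{bmatrix}$ in $H_{(s,t)}(R)$, the product $MN$ has $(1,1)$-entry $ax$, $(2,1)$-entry $cx+dy$, $(2,2)$-entry $dw$, $(2,3)$-entry $dh+fk$, $(3,3)$-entry $gk$, with zeros elsewhere — and it again satisfies the defining relations of $H_{(s,t)}(R)$, so the set is closed (this is already asserted in the excerpt). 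Also $ME=Me\,I_3$ just multiplies every entry of $M$ on the right by $e$.

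For the sufficiency direction (assume $H_{(s,t)}(R)$ right $E$-reversible, show $R$ right $e$-reversible), the plan is the standard embedding trick: given $a,b\in R$ with $ab=0$, I need an element of $H_{(s,t)}(R)$ whose ``$a$-slot'' is $a$ and which pairs with a companion matrix to give product $0$, and whose reversed product, hit with $E$, forces $bae=0$. The subtlety is that a matrix in $H_{(s,t)}(R)$ cannot have an arbitrary single entry be nonzero in isolation because of the constraints $a-d=sc$, $d-g=tf$; the cleanest choice is the scalar-type matrix $aI_3$ is \emph{not} in $H_{(s,t)}(R)$ in general either — rather, I would use $A=\begin{bmatrix}a&0&0\\0&a&0\\0&0&a\end{bmatrix}$? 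No: that needs $a-a=s\cdot 0$ and $a-a=t\cdot 0$, which holds, so $aI_3\in H_{(s,t)}(R)$ after all. So take $A=aI_3$, $B=bI_3$; then $AB=(ab)I_3=0$, right $E$-reversibility gives $BAE=(bae)I_3=0$, whence $bae=0$. That settles sufficiency.

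For the necessity direction (assume $R$ right $e$-reversible, show $H_{(s,t)}(R)$ right $E$-reversible), I would take $M,N\in H_{(s,t)}(R)$ with $MN=0$, read off from the product formula above that $ax=0$, $cx+dy=0$, $dw=0$, $dh+fk=0$, $gk=0$, and also use the derived relations (e.g. from $a-d=sc$ with $s$ central invertible, and $ax=0$, $dw=0$, one gets further vanishing). Then I must show every entry of $NME$ vanishes, i.e. every entry of $NM$ is killed on the right by $e$. The entries of $NM$ are $xa$, $ya+wc$, $wd$, $wf+hg$, $kg$ (in the $(1,1),(2,1),(2,2),(2,3),(3,3)$ positions). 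For each product like $xa$: from $ax=0$ and right $e$-reversibility, $xae=0$; from $dw=0$, $wde=0$; from $gk=0$, $kge=0$. The cross terms $ya$ and $wc$ (and $wf$, $hg$) are the real work: I must extract from the constraint equations and the off-diagonal product equations enough relations to pair the right factors. For instance $cx+dy=0$ combined with $a-d=sc$: multiply by suitable central units and use $ax=0$ to isolate products, then apply $e$-reversibility to each. Here I expect Theorem~\ref{ere} (so that $e$ is left semicentral, giving $ye=eye$ etc.) and Examples~\ref{exs}/Corollary-level facts to be needed to juggle the $e$'s past the middle of products.

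\textbf{The main obstacle} will be the cross-term bookkeeping in the necessity direction: turning the five product equations plus the two defining constraints into a proof that each of the mixed entries (such as $ya+wc$ and $wf+hg$) is annihilated on the right by $e$. Unlike the $T[R,S]$ case, here the entries are genuinely coupled, so one cannot just apply $e$-reversibility entrywise; one has to first derive the ``hidden'' orthogonality relations (e.g. show $cx=0$ and $dy=0$ separately, or that $ye$ and $ce$ are controlled) by exploiting centrality and invertibility of $s,t$, and only then invoke right $e$-reversibility of $R$ together with left semicentrality of $e$. I would organize this as: (i) from $MN=0$ derive a clean list of scalar equations in $R$; (ii) upgrade these to the separated relations needed; (iii) verify entrywise that $NME=0$; (iv) do the easy reverse implication via $aI_3,bI_3$ as above. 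Once the separated relations in step (ii) are in hand, steps (iii)--(iv) are routine.
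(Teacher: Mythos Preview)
Your sufficiency direction (embed $a,b\in R$ as $aI_3,bI_3\in H_{(s,t)}(R)$) is exactly what the paper does, and your setup for necessity --- reading off the five scalar equations from $MN=0$ and identifying the target entries $xa$, $ya+wc$, $wd$, $wf+hg$, $kg$ of $NM$ --- is correct. The gap is in your plan for the cross terms.

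You propose to ``show $cx=0$ and $dy=0$ separately'' from $cx+dy=0$, $a-d=sc$ and $ax=0$, and then apply $e$-reversibility termwise. This cannot work: take $R=\Bbb Z\times\Bbb Z$, $s=t=(1,1)$, $a=(1,0)$, $d=(0,1)$, $x=(0,1)$, $w=(1,0)$; then $c=a-d=(1,-1)$, $y=x-w=(-1,1)$, and one checks $ax=dw=cx+dy=0$, yet $cx=(0,-1)\neq 0$ and $dy=(0,1)\neq 0$. With $e=(1,0)$ (so $R$ is right $e$-reversible, being commutative) one also has $yae=(-1,0)\neq 0$ and $wce=(1,0)\neq 0$, so the fallback ``$ye$ and $ce$ are controlled'' individually fails too. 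The point is that you are invoking only the constraint $a-d=sc$ on $M$; the key is the constraint $x-w=sy$ on $N$. Writing $y=s^{-1}(x-w)$ and using $a-sc=d$ gives
\[
ya+wc \;=\; s^{-1}(x-w)a + wc \;=\; s^{-1}xa - s^{-1}w(a-sc) \;=\; s^{-1}(xa - wd),
\]
so $(ya+wc)e = s^{-1}(xae - wde)=0$ directly from $ax=0$, $dw=0$ and right $e$-reversibility of $R$. The $(2,3)$-entry is handled symmetrically via $h=t^{-1}(w-k)$ and $d-tf=g$. Note that $e$ stays on the far right throughout, so left semicentrality (Theorem~\ref{ere}) is not needed at all; your expectation that you must ``juggle the $e$'s past the middle of products'' is a symptom of having taken the wrong route.
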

\begin{proof} For the necessity, assume that $R$ is a right $e$-reversible ring.
Let $A = \begin{bmatrix}a&0&0\\c&d&f\\0&0&g\end{bmatrix}$, $B =
\begin{bmatrix}x&0&0\\y&z&u\\0&0&v\end{bmatrix}\in H_{(s,t)}(R)$
with $AB = 0$. Then $ax = 0$, $dz = 0$, $gv = 0$, $cx + dy = 0$
and $du + fv = 0$. Then $xae = 0$, $zde = 0$, $vge = 0$. First we
show $(ya + zc)e = 0$ and $(zf + ug)e = 0$ to reach $BAE = 0$. We
use $a - d = sc$, $d - g = tf$, $x - z = sy$ and $z - v = tu$ in
the sequel without reference. By using these equalities, we have
$ya + zc = s^{-1}(x - z)a + zc = s^{-1}xa - s^{-1}za + s^{-1}(szc)
= s^{-1}xa - s^{-1}z(a - sc) = s^{-1}xa - s^{-1}zd = s^{-1}(xa -
zd)$. Multiplying the latter equalities on the right by $e$ yields
$(ya + zc)e = s^{-1}(xa - zd)e = s^{-1}xae - s^{-1}zde = 0$ since
$xae = 0$ and $zde = 0$. Similarly, $zf + ug = t^{-1}(ztf) +
t^{-1}(z - v)g = t^{-1}z(tf + g) - t^{-1}vg = t^{-1}zd -
t^{-1}vg$. Multiplying the latter equalities on the right by $e$
we get $(zf + ug)e =  t^{-1}zde - t^{-1}vge = 0$ since $zde = 0$
and $vge = 0$. It follows that $BAE = 0$. Therefore $H_{(s,t)}(R)$
is right $E$-reversible. For the sufficiency, suppose that
$H_{(s,t)}(R)$ is a right $E$-reversible ring. Let $a$, $b\in R$
with $ab = 0$. Let $A = aI_3$, $B = bI_3$. Then $AB = 0$. By
supposition, $BAE = 0$. It gives us $bae = 0$. Hence $R$ is right
$e$-reversible.
\end{proof}
We use the proof of Lemma \ref{hu} to complete the proof of the
following result.
\begin{thm} Let $R$ be a ring with $e\in$ Id$(R)$ and consider the ring $H_{(s,t)}(R)$.
\begin{enumerate}
\item[{\rm (1)}] Let $s = t = 1$. Then Id$(H_{(1,1)}(R)) = \{eI_3,
E_1 = ee_{11} + ee_{21}, E_2 = -ee_{23} + ee_{33}, E_3 = E_1 +
E_2, E_4 = -ee_{21} + ee_{22} + ee_{23}, E_5 = ee_{11} + ee_{22} +
ee_{23}, E_6 = -ee_{21} + ee_{22} + ee_{33}\}$ is a collection of
idempotents in $H_{(1,1)}(R)$. Moreover, $R$ is right
$e$-reversible if and only if $H_{(1,1)}(R)$ is right
$E$-reversible for each $E\in$ Id$(H_{(1,1))}(R)$.

\item[{\rm (2)}] Let $s\neq 1$ and $t = 1$. Then Id($H_{(s,1)}(R)) =
\{eI_3, F_1 = ee_{11} + ee_{22} + ee_{23}, F_2 = -s^{-1}ee_{21} +
ee_{22} + ee_{33}\}$ is a collection of idempotents in
$H_{(s,1)}(R)$. Moreover, $R$ is right $e$-reversible if and only
if $H_{(s,1)}(R)$ is right $E$-reversible for each $E\in$
Id$(H_{(s,1)}(R))$.

\item[{\rm (3)}] Let $s = 1$ and $t\neq 1$. Then Id($H_{(1,t)}(R)) =
\{eI_3, G_1 = ee_{11} + ee_{22} + t^{-1}ee_{23}, G_2 = -ee_{21} +
ee_{22} + ee_{33}\}$ is a collection of idempotents in
$H_{(1,t)}(R)$. Moreover, $R$ is right $e$-reversible if and only
if $H_{(1,t)}(R)$ is right $E$-reversible for each $E\in$
Id$(H_{(1,t)}(R))$.

\item[{\rm (4)}] Let $s \neq 1$ and $t\neq 1$. Then Id($H_{(s,t)}(R))
= \{eI_3, H_1 = F_2, H_2 = G_1, H_3 = -s^{-1}ee_{21} + ee_{22} +
t^{-1}ee_{23}\}$ is a collection of idempotents in $H_{(s,t)}(R)$.
Moreover, $R$ is right $e$-reversible if and only if
$H_{(s,t)}(R)$ is right $E$-reversible for each $E\in$
Id$(H_{(s,t)}(R))$.
\end{enumerate}
\end{thm}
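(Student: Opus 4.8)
The plan is to treat all four parameter regimes in parallel, splitting each into two tasks: verifying that the displayed matrices are idempotents of $H_{(s,t)}(R)$, and establishing the right $e$-reversibility equivalence. For the first task, writing a generic element as $M=\begin{bmatrix}a&0&0\\c&d&f\\0&0&g\end{bmatrix}$ with $a-d=sc$, $d-g=tf$, one has $M^2=\begin{bmatrix}a^2&0&0\\ca+dc&d^2&df+fg\\0&0&g^2\end{bmatrix}$, and, using $c=s^{-1}(a-d)$ and $f=t^{-1}(d-g)$ (legitimate since $s,t$ are central units) together with $e^2=e$, one checks directly that each listed matrix squares to itself; the only point to watch is that it satisfies the defining relations $a-d=sc$, $d-g=tf$, which is exactly why the scalar $s^{-1}$ (resp.\ $t^{-1}$) shows up in an off-diagonal entry precisely when $s\neq1$ (resp.\ $t\neq1$). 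This is routine computation.

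For the equivalence, the implication from ``$H_{(s,t)}(R)$ is right $E$-reversible for every listed $E$'' to ``$R$ is right $e$-reversible'' is immediate, because $eI_3$ occurs on every list and Lemma \ref{hu} already equates right $eI_3$-reversibility of $H_{(s,t)}(R)$ with right $e$-reversibility of $R$. For the converse, assume $R$ is right $e$-reversible, fix a listed idempotent $E$, and take $A=\begin{bmatrix}a&0&0\\c&d&f\\0&0&g\end{bmatrix}$, $B=\begin{bmatrix}x&0&0\\y&z&u\\0&0&v\end{bmatrix}$ in $H_{(s,t)}(R)$ with $AB=0$. I would reuse the computation in the proof of Lemma \ref{hu}, which establishes not merely $BA\cdot(eI_3)=0$ but, entry by entry, the identities $xae=0$, $(ya+zc)e=0$, $(zd)e=0$, $(zf+ug)e=0$, $(vg)e=0$; in other words, every entry of $BA$ is killed on the right by $e$. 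Each entry of a listed $E$ has the form $re$ with $r$ central in $R$ (in fact $r\in\{0,\pm1,-s^{-1},t^{-1}\}$), so each entry of $BAE$ is a sum of terms (entry of $BA$)$\cdot re=r\cdot$(entry of $BA$)$e=0$; hence $BAE=0$ and $H_{(s,t)}(R)$ is right $E$-reversible. The argument is uniform in the four cases because it uses only that every listed idempotent is a matrix all of whose entries lie in the central $R$-span of $e$.

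The main obstacle is purely organizational: the bookkeeping needed, in each of the four $(s,t)$-regimes, to exhibit the displayed idempotents in their normalized form and check the defining relations. The reversibility half carries essentially no new difficulty beyond Lemma \ref{hu}: the crux is simply that the five identities proved there annihilate every entry of $BA$ on the right by $e$, while each relevant $E$ is built from central $R$-multiples of $e$, so right multiplication of $BA$ by $E$ can only yield $0$.
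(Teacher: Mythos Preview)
Your proposal is correct and follows the same overall strategy as the paper: both directions of the equivalence go through Lemma~\ref{hu}, with the sufficiency being immediate from the presence of $eI_3$ on each list and the necessity resting on the five entrywise identities $xae=0$, $(ya+zc)e=0$, $zde=0$, $(zf+ug)e=0$, $vge=0$ established in the proof of that lemma.

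The one genuine difference is in how the necessity is completed. The paper proceeds case by case: for each listed idempotent $E_i$ it writes out the matrix $BAE_i$ explicitly and reads off that every entry vanishes by invoking the appropriate subset of the five identities. You instead make the single structural observation that every entry of each listed $E$ lies in the central $R$-span of $e$, so each entry of $BAE$ is a central linear combination of products (entry of $BA$)$\cdot e$, all of which are already known to be zero. Your packaging handles all four parameter regimes and all listed idempotents uniformly in one line, whereas the paper's explicit computations make the mechanism visible for each $E_i$ individually; the underlying content is identical.
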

\begin{proof} For the sufficiency in each case, since $H_{(s,t)}(R)$ is right
$eI_3$-reversible, $R$ is right $e$-reversible by Lemma \ref{hu}.
For the necessity, let $R$ be right $e$-reversible.\\
 (1) Assume that $s = t = 1$.
Let $A = \begin{bmatrix}a&0&0\\c&d&f\\0&0&g\end{bmatrix}$, $B =
\begin{bmatrix}x&0&0\\y&z&u\\0&0&v\end{bmatrix}\in H_{(1,1)}(R)$
with $AB = 0$. Then $ax = 0$, $dz = 0$, $gv = 0$, $cx + dy = 0$
and $du + fv = 0$. Since $R$ is right $e$-reversible, we have $xae
= 0$, $zde = 0$, $vge = 0$, $(ya + zc)e = 0$ and $(zf + ug)e = 0$.
Consider the following items:
\begin{itemize}
    \item $BAE_1 = 0$ since $zde = 0$ and
$(ya + zc)e = 0$.
    \item $BAE_2 = 0$ since $-zde + (zf + ug)e = 0$ and
$vge = 0$.
    \item $E_3 = E_1 + E_2$ yields $BAE_3 = 0$.
    \item $BAE_4 = 0$
since $zde = 0$.
    \item $BAE_5 =
\begin{bmatrix}xae&0&0\\(ya+zc)e&zde&zde\\0&0&0\end{bmatrix} = 0$
due to $xae = 0$, $zde = 0$ and $xae - zde = (ya + zc)e = 0$.
    \item $BAE_6 =
\begin{bmatrix}0&0&0\\-zde&zde&(zf+ug)e\\0&0&vge\end{bmatrix} = 0$
due to $zde = 0$, $vge = 0$ and $(zf+ug)e = 0$.
\end{itemize}
The rest is proved similarly.
\end{proof}

{\bf Generalized matrix rings:} Let $R$ be a ring and $s$ a
central element of $R$. Then  $\begin{bmatrix}
R&R\\R&R\end{bmatrix}$ becomes a ring denoted by $K_s(R)$ with
addition defined componentwise and multiplication defined in
\cite{KT} by
$$\begin{bmatrix} a_1&x_1\\y_1&b_1\end{bmatrix}\begin{bmatrix}
a_2&x_2\\y_2&b_2\end{bmatrix} = \begin{bmatrix}a_1
a_2+sx_1y_2&a_1x_2+x_1b_2\\y_1a_2+b_1y_2&sy_1x_2+b_1b_2\end{bmatrix}.$$
In \cite{KT}, $K_s(R)$ is called a {\it generalized matrix ring
over $R$}. There are rings $R$ such that $K_0(R)$ need not be
right $E$-reversible for each $0\neq E\in$ Id$(K_0(R))$ as shown
below.

\begin{prop}\label{inv} The ring $K_0(\Bbb Z)$ is not $E$-reversible for each $0\neq E\in$ Id$(K_0(\Bbb Z))$.
\end{prop}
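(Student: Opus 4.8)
The plan is to pin down $\mathrm{Id}(K_0(\Bbb Z))$ completely, then dispose of the central idempotent $I_2$ by an ad hoc witness and handle all remaining nonzero idempotents at once through Theorem \ref{ere}.

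First I would classify the idempotents. For $\begin{bmatrix}a&b\\c&d\end{bmatrix}\in K_0(\Bbb Z)$, squaring in $K_0$ (so with $s=0$, and using commutativity of $\Bbb Z$) gives $\begin{bmatrix}a^2&(a+d)b\\(a+d)c&d^2\end{bmatrix}$, so idempotence amounts to $a^2=a$, $d^2=d$, $(a+d-1)b=0$ and $(a+d-1)c=0$. Hence $a,d\in\{0,1\}$; if $(a,d)=(0,0)$ or $(1,1)$ then $b=c=0$, while if $(a,d)=(1,0)$ or $(0,1)$ then $b,c$ are arbitrary. Therefore the nonzero idempotents of $K_0(\Bbb Z)$ are exactly $I_2$ together with the matrices $\begin{bmatrix}1&b\\c&0\end{bmatrix}$ and $\begin{bmatrix}0&b\\c&1\end{bmatrix}$ with $b,c\in\Bbb Z$.

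For $E=I_2$, right $E$-reversibility is just reversibility, which fails: with $A=\begin{bmatrix}0&1\\0&0\end{bmatrix}$ and $B=\begin{bmatrix}1&0\\0&0\end{bmatrix}$ one gets $AB=0$ but $BA=\begin{bmatrix}0&1\\0&0\end{bmatrix}\neq 0$, so $BAE=BA\neq 0$. For every other nonzero idempotent $E$ I would invoke Theorem \ref{ere}: if $K_0(\Bbb Z)$ were right $E$-reversible then $E$ would be left semicentral, so it suffices to produce some $a$ with $aE\neq EaE$. For $E=\begin{bmatrix}1&b\\c&0\end{bmatrix}$ take $a=\begin{bmatrix}0&0\\1&0\end{bmatrix}$: a one-line product in $K_0(\Bbb Z)$ gives $aE=a$ while $Ea=0$, so $EaE=0\neq aE$. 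For $E=\begin{bmatrix}0&b\\c&1\end{bmatrix}$ take $a=\begin{bmatrix}0&1\\0&0\end{bmatrix}$: then again $aE=a$ but $Ea=0$. In each case $E$ is not left semicentral, so $K_0(\Bbb Z)$ is not right $E$-reversible and hence not $E$-reversible.

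The only substantive step is the idempotent classification; after that every check is a single product in the twisted multiplication, where the point is simply that $s=0$ annihilates the two cross terms (this is also what makes the forms $(a+d)b$, $(a+d)c$ appear in the squaring and what forces $Ea=0$ above). The one case that resists the uniform semicentrality argument is $E=I_2$, which is central and therefore automatically left semicentral; there one genuinely needs the explicit non-reversibility witness for $K_0(\Bbb Z)$ itself.
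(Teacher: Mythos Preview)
Your proof is correct. The idempotent classification matches the paper's, and your explicit witnesses check out in the $K_0$ multiplication (with $s=0$ killing the cross terms exactly as you say).

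The route diverges from the paper after that point. The paper handles each of the three families of nonzero idempotents by exhibiting, in each case, an explicit pair $A,B$ with $AB=0$ but $BAE\neq 0$: for $E=I_2$ it uses $A=\begin{bmatrix}1&0\\1&0\end{bmatrix}$, $B=\begin{bmatrix}0&0\\1&1\end{bmatrix}$; for $E=\begin{bmatrix}1&x\\y&0\end{bmatrix}$ it uses $A=\begin{bmatrix}1&1\\1&0\end{bmatrix}$, $B=\begin{bmatrix}0&0\\1&0\end{bmatrix}$; and for $E=\begin{bmatrix}0&x\\y&1\end{bmatrix}$ it uses $A=\begin{bmatrix}0&1\\1&1\end{bmatrix}$, $B=\begin{bmatrix}1&0\\-1&0\end{bmatrix}$. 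You instead treat only $E=I_2$ by a direct counterexample and dispose of the two remaining families uniformly via Theorem~\ref{ere}, by showing that $E$ fails to be left semicentral. Your approach is more conceptual---it explains the obstruction as a failure of semicentrality rather than producing ad hoc zero-divisor pairs---and the verification is shorter, since one product $aE$ and one product $Ea$ suffice in place of computing $AB$ and $BAE$ with parameters $x,y$ floating around. The paper's approach, on the other hand, is entirely self-contained and does not appeal to the earlier structure theorem.
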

\begin{proof} It is easily checked that Id$(K_0(\Bbb Z))$ consists of $0$, $I_2$, all matrices of the form $\begin{bmatrix}
1&x\\y&0\end{bmatrix}$ and all matrices of the form
$\begin{bmatrix} 0&x\\y&1\end{bmatrix}$ where $x,y\in \Bbb Z$.
Consider the following cases:\\
\begin{enumerate}
    \item Assume that $E=I_2$. For $A = \begin{bmatrix}
1&0\\1&0\end{bmatrix}$, $B = \begin{bmatrix}
0&0\\1&1\end{bmatrix}\in K_0(\Bbb Z)$, we have $AB=0$ but $BAE\neq
0$.
    \item Assume that $E$ is of the form $\begin{bmatrix}
1&x\\y&0\end{bmatrix}$ where $x,y\in \Bbb Z$. For $A =
\begin{bmatrix} 1&1\\1&0\end{bmatrix}$, $B = \begin{bmatrix}
0&0\\1&0\end{bmatrix}\in K_0(\Bbb Z)$, we have $AB=0$ but $BAE\neq
0$.
    \item Assume that $E$ is of the form $\begin{bmatrix} 0&x\\y&1\end{bmatrix}$  where $x,y\in \Bbb Z$. For $A =
\begin{bmatrix} 0&1\\1&1\end{bmatrix}$, $B = \begin{bmatrix}
1&0\\-1&0\end{bmatrix}\in K_0(\Bbb Z)$, we have $AB=0$ but
$BAE\neq 0$.
\end{enumerate}
\end{proof}

\end{document}